\newcommand{\dotp}[2]{\left\langle #1, #2\right\rangle}
\newcommand{\bfx}{{\bf x}}
\newcommand{\bfxi} {\mbox{\boldmath $\xi$}}
\newcommand{\abs}[1]{\left\vert#1\right\vert}
\newcommand{\norm}[1]{\big\Vert#1\big\Vert}
\newcommand\wt[1]{{ \widetilde{#1} }}
\newcommand \bbP{\mathbb{P}}
\newcommand \bbE{\mathbb{E}}
\newcommand \bft{\boldsymbol \theta}
\def\T{{ \mathrm{\scriptscriptstyle T} }}
\def\TV{{ \mathrm{\scriptscriptstyle TV} }}
\def\Gauss{{ \mathrm{N} }}
\def\d{{ \mbox{d} }}
\newcommand{\ind}{\mathbbm{1}}
\def\m{\mathcal}
\def\mb{\mathbb}
\def\mr{\mathrm}
\def\tr{{\rm tr\,}}
\newtheorem{theorem}{Theorem}[section]
\newtheorem{lemma}[theorem]{Lemma}
\newtheorem{corollary}[theorem]{Corollary}
\newtheorem{rem}{Remark}[section]
\begin{document}
%\begin{center}
%{\Large {\bf Simplex factor models for multivariate unordered categorical data}}
%\end{center}

\title{Posterior contraction in Gaussian process regression using Wasserstein approximations}

\author{ {\bf Anirban Bhattacharya}\\
Department of Statistics, Texas A \&M University, College Station, TX, \\email: anirbanb@stat.tamu.edu
\\{\bf Debdeep Pati}, \\Department of Statistics, Florida State University, Tallahassee, FL, \\email: debdeep@stat.fsu.edu}

\maketitle

\begin{center}
\textbf{Abstract}
\end{center}
We study posterior rates of contraction in Gaussian process regression with unbounded covariate domain. Our argument relies on developing a Gaussian approximation to the posterior of the leading coefficients of a Karhunen--Lo{\`e}ve expansion of the Gaussian process. The salient feature of our result is deriving such an approximation in the $L^2$ Wasserstein distance and relating the speed of the approximation to the posterior contraction rate using a coupling argument. Specific illustrations are provided for the Gaussian or squared-exponential covariance kernel. 
\vspace*{.3in}
\noindent\textsc{Keywords}: {kernel regression; Gaussian process; Hermite polynomials; posterior contraction; random design; Wasserstein distance }

\section{Introduction}

Gaussian process (GP) priors \cite{rasmussen2006gaussian} are popularly used in a variety of machine learning applications including regression, classification, density estimation, latent variable modeling, unsupervised learning to name a few. GP priors also share a deep connection with frequentist reproducible kernel Hilbert space (RKHS) based regularization methods; see, for example Chapter 6 of \cite{rasmussen2006gaussian}. Paralleling the development of scalable algorithms for GP regression, there has been substantial progress in recent years in understanding frequentist properties of the posterior arising from a Gaussian process prior. A standard way of evaluating frequentist properties of Bayesian procedures is to consider whether the amount of posterior mass assigned to a neighborhood of the true data-generating parameter (a function in the present setting) converges to one with increasing sample size. If the neighborhood size is fixed, the above phenomenon is termed {\em posterior consistency}, while if the neighborhood size is allowed to shrink to zero, then the (best possible) shrinking rate is termed the {\em posterior contraction rate}. \cite{ghosal2006posterior,choi2007posterior} established posterior consistency of GP priors, while posterior contraction rates in a variety of contexts were derived in \cite{van2007bayesian,van2008rates,van2009adaptive,van2011information,aniso2014,pati2015rd} among others; see also \cite{seeger2008information} for an information-theoretic approach. In particular, it has been established in  various contexts that the posterior distribution contracts at an optimal rate (up to a logarithmic term) in a frequentist minimax sense.

The above references exclusively deal with compactly supported functions as parameters, even though the priors in principle are random functions on full Euclidean spaces. In fact, the influential article \cite{van2009adaptive} remarks that 
\begin{center}
{\em `` Consistency of a posterior on the full space can be expected only if the tails of the functions are restricted. If they are not, then one would still expect that the posterior restricted to compact subsets contracts at some rate. At the moment there seem to exist no results that would yield such a rate (or even consistency)''}.
\end{center}
In this article, we take a step towards addressing this question borrowing inspiration from the kernel regression literature \cite{geer2000empirical,steinwart2009optimal}, where a $L^2$ norm weighted by a possibly unbounded covariate density is commonly used as a measure of discrepancy. We focus on the nonparametric regression model with Gaussian errors
\begin{align}\label{eq:npreg}
Y_i = f(X_i) + \epsilon_i, \quad \epsilon_i \sim \Gauss(0, \sigma^2), \quad i = 1, \ldots, n,
\end{align}
where $X_i \in \m X$ are covariates and $f : \m X \to \mb R$ is an unknown regression function with possibly unbounded domain $\mathcal{X} \subset \mb R^d$, which is assigned a zero-mean GP prior. We operate in a random design setting where the covariates are drawn according to a distribution $\rho$ on $\m X$ and study contraction of the posterior in an $L^2(\rho)$ norm, i.e., the $L^2$ norm on $\mb R^d$ weighted with respect to the covariate density $\rho$. This choice ensures that the large covariate values are weighted down, which can be considered as a way of restricting the tails of the function as in the comment by \cite{van2009adaptive} above. 

%The law of a zero-mean GP $\{V_t: t \in \m X\}$ is entirely specified by its (positive definite) covariance kernel $K(s, t) = \bbE V_s V_t$; whence for any $t_1,\ldots, t_m \in \m X$, $(V_{t_1}, \ldots, V_{t_m})^{\T} \sim \Gauss_m(0, \Sigma)$ with $\Sigma_{jj'} = K(t_j, t_{j'})$. Popular choices of the kernel function include the squared-exponential and M{\'a}tern families \cite{rasmussen2006gaussian}. 

%We note here that the $L^2(\rho)$ norm is commonly employed in the frequentist kernel regression literature. [REF]
In deriving the posterior rate of contraction, we expand the GP prior via a Karhunen--Lo{\`e}ve expansion \cite{adler1990introduction} and then derive a Gaussian approximation to the posterior distribution of the leading coefficients of the expansion. The Gaussian approximation is derived in an $L^2$ Wasserstein metric which is particularly suited for the present situation for reasons described in the sequel. Using a careful coupling argument, the speed of such Gaussian approximations are related to the posterior contraction rate; a result which is new to best of our knowledge. Another key ingredient of our method is to control the effect of the truncating the Karhunen--Lo{\`e}ve expansion in the posterior. This typically requires bounds on the concentration of the prior around the true function in the sup-norm, which is difficult to control for unbounded covariates. A second contribution of this paper is to develop a general result (Theorem \ref{thm:denom}) 
to bound (with high probability) the integrated log-likelihood ratio from below by a quantity involving prior concentration around the true function in the $L^2_{\rho}$ norm instead of the sup-norm. We believe this result may be of independent interest in random design Gaussian regression. We may comment here that in addition to dealing with unbounded covariates, the proposed technique has an added advantage of making the bias-variance tradeoff in the posterior explicit as in kernel ridge regression theory.

While we make general assumptions on the covariance kernel to prove our results, verifying them in a specific context requires suitable control over the eigenfunctions of the kernel. This can potentially be a non-trivial exercise, in particular if the covariance kernel involves a parameter which is sample-size dependent. We illustrate this in case of a squared-exponential kernel, for which explicit expressions of the eigenfunctions are available \cite{rasmussen2006gaussian}. We develop precise bounds on the eigenfunctions making the role of a scale parameter explicit, which should be more broadly useful. 

\section{Preliminaries}\label{sec:prelim}

%We will use $\mathcal{S}^{d-1}$ to denote the unit Euclidean sphere $\{x  \in \mathbb{R}^d : \norm{x} = 1\}$. 
For a square matrix $B$, $\tr(B)$ and $\abs{B}$ respectively denote the trace and the determinant of $B$. If $B$ is positive semi-definite (psd), then let $B^{1/2}$ denote its unique psd square-root, so that $(B^{1/2})^2 = B$. $B$ is positive definite (pd) if and only if $B^{1/2}$ is pd \cite{bhatia2009positive}, and in such cases we can unambiguously define $B^{-1/2} = (B^{-1})^{1/2}$. Given two pd matrices $B_1$ and $B_2$, we write $B_1 \succsim B_2$ if $B_1 - B_2$ is psd. 
For a $p \times d$ matrix $A = (a_{jj'})$ with $p \ge d$, the singular values of $A$ are the eigenvalues of $(A^{\T}A)^{1/2}$. We shall use $s_{\max}(A)$ and $s_{\min}(A)$ to denote the largest and smallest non-zero singular values respectively; the condition number $\kappa(A) = s_{\max}(A)/s_{\min}(A)$. The Frobenius norm ($\norm{\cdot}_F$)  and  the operator norm  ($\norm{\cdot}_2$) are defined in the usual way, with $ \norm{A}_F := \sqrt{\tr(A^{\T} A)}$ and $\norm{A}_2 := s_{\max}(A)$. Note that $\norm{A}_2^2 = \norm{A^{\T} A}_2$. 

For a vector $ x \in \mathbb{R}^d$,  $\norm{x}$ will denote its Euclidean norm.  Let $\ell_2 = \{\bft = (\theta_1, \theta_2, \ldots):  \sum_{j=1}^{\infty} \theta_j^2 < \infty\}$ denote the space of square-summable sequences, with $\norm{\bft}_{\ell_2} = (\sum_{j=1}^{\infty} \theta_j^2)^{1/2}$. Let $\Theta_{\alpha} = \{\bft \in \ell_2:  \sum_{j=1}j^{2\alpha}\theta_j^2  < \infty\}$ denote the Sobolev space of sequences with ``smoothness'' $\alpha > 0$, and denote the Sobolev norm $\norm{\theta}_{\alpha} =   (\sum_{j=1}j^{2\alpha}\theta_j^2)^{1/2}$. For a density $\rho$ on $\m X \subset \mb R^d$, let $L^2_{\rho}(\m X) = \{g: \int g(x)^2 \rho(x) dx < \infty\}$ denote the space of square-integrable functions with respect to $\rho$. $L^2_{\rho}(\m X)$ is a Hilbert space under the inner product $\dotp{g_1}{g_2} = \int g_1(x) g_2(x) \rho(x) dx$; the resulting norm will be denoted by $\norm{\cdot}_{2, \rho}$, so that $\norm{g}_{2, \rho}^2 = \int g(x)^2 \rho(x) dx$. 

Throughout $C, C', C_1, C_2, \ldots$ are generically used to denote positive constants whose values might change from one line to another, but are independent from everything else.  $\lesssim / \gtrsim$ denote inequalities upto a constant multiple.  $a \asymp b$  when we have both $a \lesssim b$ and $a \gtrsim b$. 

\subsection{{\bf The $L^p$ Wasserstein distances}}
Given two probability measures $P$ and $Q$ on $\mb R^d$, the total variation distance $d_{\TV}(P, Q)
:= \sup_{A} |P(A) - Q(A)|$ where the supremum is over all Borel subsets of $\mb R^d$ and the Kullback--Leibler divergence $D(P || Q)$ are defined in the usual way.
For $p \ge 1$, the $L^p$ Wasserstein distance with respect to the Euclidean metric (henceforth $W_p$ in short), denoted $d_{W, p}(P, Q)$, is defined as
\begin{align}\label{eq:wstein_def}
d_{W, p}(P, Q) = \inf_{\mbox{joint}(P, Q)} (\bbE \norm{X - Y}^p)^{1/p},
\end{align}
where $\mbox{joint}(P, Q)$ denotes all random vectors $(X, Y) \in \mb R^d \times \mb R^d$, such that $X \sim P, Y \sim Q$. The Wasserstein distances have their origins in the problem of {\em optimal transport}; refer to \cite{givens1984class,gelbrich1990formula} for background and properties. Explicit expressions are available for the $W_2$ distance between two $d$-dimensional Gaussian measures. In particular, if $P \equiv \Gauss_d(\mu_1, \Sigma_1), Q \equiv \Gauss_d(\mu_2, \Sigma_2)$ and $\Sigma_1 \Sigma_2 = \Sigma_2 \Sigma_1$, then
\begin{align}\label{eq:wstein_normal} 
d_{W,2}^2(P, Q) = \norm{\mu_1 - \mu_2}^2 + \norm{\Sigma_1^{1/2} - \Sigma_2^{1/2}}_F^2.
\end{align}
For $d = 1$, the $W_2$ distance is identical to the Fr{\'e}chet distance \cite{frechet1957distance}. 

\section{Posterior contraction in random design GP regression}\label{sec:rdgp}

Write the nonparametric regression model \eqref{eq:npreg} in vector form as 
\begin{align}\label{eq:normal_mean}
Y = F + \varepsilon,  \quad \varepsilon \sim \Gauss(0, \sigma^2 \mr I_n),
\end{align}
where $Y = (Y_1, \ldots, Y_n)^{\T}$ and $F = (f(X_1), \ldots, f(X_n))^{\T}$. We shall assume the error variance $\sigma^2$ to be known throughout this paper. Let $f_0 : \m X \to \mb R$ denote the true data generating function and define $F_0 = (f_0(X_1), \ldots, f_0(X_n))^{\T}$. 

As mentioned in the Introduction, we operate in a random design setting where we assume that the covariates $X_i$ are independent and identically distributed according to a known density $\rho$ on $\m X$ and $Y_i \mid X_i \sim \Gauss(f_0(X_i), \sigma^2)$ independently for $i = 1, \ldots, n$. Letting $h(y, x) = \Gauss(y \mid f_0(x), \sigma^2) \rho(x)$, the true joint density of $(Y, X)$ is an $n$-fold product of $h$. We shall use $\bbE_0$ to denote an expectation with respect to the true joint distribution of $(Y, X)$; $\bbE_X$ and $\bbE_{0\mid X}$ will respectively denote an expectation with respect to the marginal distribution of $X$ and the conditional of $Y$ given $X$. Similarly, $\bbP_0, \bbP_X$ and $\bbP_{0 \mid X}$ will denote probabilities under the respective distributions. 

Consider a $\mbox{GP}(0, \sigma^2 K)$ prior on $f$, where $K(\cdot, \cdot)$ is a positive definite correlation function, i.e., $K(x,x) = 1$ for all $x \in \m X$. We shall generically use $\Pi$ and $\Pi(\cdot \mid Y, X)$ to denote the prior and posterior distribution of $f$. 
Under suitable regularity conditions, Mercer's theorem \cite{adler1990introduction} guarantees that the kernel $K$ admits an eigen-expansion of the form $K(x, x') = \sum_{j=1}^{\infty} \lambda_j \phi_j(x) \phi_j(x')$ in $L^2_{\rho}(\m X)$, where $\{\phi_j\}$ is an orthonormal system in $L^2_{\rho}(\m X)$ ($\int \phi_j(x) \phi_l(x) \rho(x) dx = \delta_{jl}$) and $\{\lambda_j\}$ the corresponding non-negative eigenvalues, which satisfy  
\begin{align}\label{eq:eigenfn}
\int K(x, x') \phi_j(x') \rho(x') dx' = \lambda_j \phi_j(x), \quad j = 1, 2, \ldots
\end{align}
As a concrete example, consider the squared-exponential kernel $K_a(x, x') = \exp(-a^2 \norm{x - x'}^2)$ indexed by a length-scale parameter $a$. For Gaussian covariate distributions $\rho$, explicit expressions for the eigenfunctions and eigenvalues are known \cite{rasmussen2006gaussian}. Specifically, when the dimension $d = 1$, with a Gaussian covariate density 
$\rho(x) = \sqrt{2b/\pi} \, e^{-2 b x^2}$ and $c = \sqrt{b^2 + 2 b a^2}$,  
{\small\begin{align}\label{eq:hermitebasis}
\phi_j(x) = \frac{(c/b)^{1/4}}{ \sqrt{ 2^{j-1} \, (j - 1) ! } } \, e^{-(c - b) x^2}  H_{j-1}(\sqrt{2c} \, x), \quad \lambda_j = \bigg( \frac{2b}{b + a^2 + c} \bigg)^{1/2} \, \bigg( \frac{a^2}{ b + a^2 + c} \bigg)^{j-1},
\end{align}}
where $H_k(x) = (-1)^n e^{x^2} \frac{d^k}{dx^k} e^{-x^2}, k = 0, 1, \ldots$ denote the Hermite polynomials\footnote{Many references term $H_k$s the ``physicist's Hermite polynomial'' to distinguish from the ``probabilist's Hermite polynomial'' $h_k(x) = 2^{-k/2}H_k(x/\sqrt{2})$}. We shall return to the squared-exponential kernel in Section 4. 

By the Karhunen--Lo{\`e}ve Theorem \cite{adler1990introduction}, the GP itself can be expanded as 
\begin{align}\label{eq:kar_lo}
f(x) = \sigma \sum_{j=1}^{\infty} \sqrt{\lambda_j} \ Z_j \phi_j(x),
\end{align}
where $Z_j$s are i.i.d. $\Gauss(0, 1)$. If the series representation above is truncated to the first $k$ terms and the resulting random function is denoted by $f^t$, then it follows from \eqref{eq:eigenfn} and the orthogonality of the eigenfunctions $\phi_j$ that $E \norm{f - f^t}_{2, \rho}^2 = \sigma^2 \sum_{j=k+1}^{\infty} \lambda_j$. The accuracy of the truncation relies on the rate of decay of the eigenvalues, which is related to the smoothness of the GP. For example, if the sample paths of a GP are infinite smooth, then the eigenvalues decay exponentially fast, so that relatively few leading terms in the expansion \eqref{eq:kar_lo} offer a close reconstruction of the original process. 

Given a $\mbox{GP}(0, \sigma^2 K)$ prior, we shall consider such truncations of \eqref{eq:kar_lo} to define priors which we refer to as truncated Gaussian process (tGP) priors: 
\begin{align}\label{eq:tke}
f^t(x) = \sum_{j=1}^{k_n} \theta_j \phi_j(x), \quad \theta_j \sim \Gauss(0, \sigma^2 \lambda_j).
\end{align}
Let $\theta^t = (\theta_1, \ldots, \theta_{k_n})^{\T}$ denote the $k_n$-dimensional vector of coefficients in \eqref{eq:tke} and $\Lambda = \mbox{diag}(\lambda_1, \ldots, \lambda_{k_n})$, so that $\theta^t \sim \Gauss(0, \sigma^2 \Lambda)$.
One may consider the tGP priors \eqref{eq:tke} as {\em sieve} approximations to the original GP prior, where the basis functions $\phi_j$s and the prior variances $\lambda_j$s are determined by the choice of the kernel $K$. We denote such priors by $\mbox{tGP}_{k_n}(0, K)$; the truncation level $k_n$ will be suppressed when clear from the context. 

We note here that the tGP prior is solely introduced to obtain theoretical understanding of the original GP prior and the resulting posterior. When working with a tGP prior, one can conveniently direct attention to the coefficient vector $\theta^t$, which is finite-dimensional; albeit with the dimension possibly increasing with sample size $n$. In fact, defining the $n \times k_n$ (random) matrix $\Phi = (\phi_j(X_i))_{1 \le i \le n, 1 \le j \le k_n}$, one can write model \eqref{eq:normal_mean} equipped with a tGP prior \eqref{eq:tke} as 
\begin{align}\label{eq:tgp_vec}
Y \sim \Gauss(F, \sigma^2 \mr I_n), \quad F = \Phi \theta^t, \quad \theta^t \sim \Gauss_{k_n}(0, \sigma^2 \Lambda).
\end{align}
Using standard Gaussian conjugacy, the posterior distribution of $\theta^t$ under \eqref{eq:tgp_vec} is 
\begin{align}\label{eq:tgp_posterior}
\wt{\m W}^t(\cdot \mid X, Y) \equiv \Gauss\big(\wt{\theta}, \wt{\Sigma}\big), \quad \wt{\theta} = (\Phi^{\T} \Phi + \Lambda^{-1})^{-1} \Phi^{\T} Y, \quad \wt{\Sigma} = \sigma^2 (\Phi^{\T} \Phi + \Lambda^{-1})^{-1}.
\end{align}
From \eqref{eq:tgp_posterior}, the posterior distribution of $F = \Phi \theta^t$ is also Gaussian. 
With a slight abuse of terminology, we shall refer to the posterior distribution \eqref{eq:tgp_posterior} of $\theta^t$ as the {\em tGP posterior} induced by the tGP prior. The role of the tGP in deriving posterior rates of contraction for the original GP prior is made precise through the following general rate theorem for GP priors. 
We first state our assumption regarding the true data generating function $f_0$ and introduce some notations. 
\begin{description}
\item[(T1)] The true data generating function $f_0 \in L^2_{\rho}(\m X)$, so that $f_0 = \sum_{j=1}^{\infty} \theta_{0j} \phi_j$ with $\theta_{0j} = \dotp{f_0}{\phi_j} := \int f_0(x) \phi_j(x) \rho(x) dx$. The convergence of the infinite sum is in an $L^2_{\rho}$ sense, i.e., $\norm{f_0 - \sum_{j=1}^J \theta_{0j} \phi_j}_{2, \rho} \to 0$ as $J \to \infty$.

Define $f_0^{t} = \sum_{j=1}^{k_n} \theta_{0j} \phi_j$, $\theta_0^{t} = (\theta_{0j})_{1 \le j \le k_n} \in \mb R^{k_n}$. Also define $\norm{\theta_0^t}_{\mb H}^2 = \sum_{j=1}^{k_n} \frac{\theta_{0j}^2}{\lambda_j} = \norm{\Lambda^{-1/2} \theta_0^t}^2$.
\end{description}

\begin{theorem}\label{thm:rd_gp_gen}
Consider model \eqref{eq:npreg} with a GP prior $f \sim \mbox{GP}(0, \sigma^2 K)$, where the kernel $K$ has eigenfunctions $\{\phi_j\}$ and eigenvalues $\{\lambda_j\}$ with respect to the covariate density $\rho$ as in \eqref{eq:eigenfn}. Assume the true function $f_0$ satisfies $\mathrm{ {\bf (T1) }}$. For $k_n < n$, let $\wt{\m W}^t(\cdot \mid X, Y)$ denote the tGP posterior as in \eqref{eq:tgp_posterior}. 
Let $\epsilon_n \to 0$ be a sequence with $n \epsilon_n^2 \to \infty$ and $\norm{f_0 - f_0^t}_{2, \rho} \lesssim \epsilon_n$.  
Then, for any $M > 0$, 
\begin{align}
\bbE_0 \Pi(\norm{f - f_0}_{2, \rho} > M \epsilon_n \mid Y, X) \le T_{1n} + T_{2n}, 
\label{eq:rates_gp_gen} 
\end{align}
where
{\small \begin{align}
&T_{1n} = \frac{\bbE_0\left\{ \ind_{A_n}(X) \d_{W,2}^2 \bigg[\wt{\m W}^t(\cdot \mid Y, X), \Gauss_{k_n}\bigg(\theta_0^t, \frac{\sigma^2}{n} \mr I_{k_n}\bigg)  \bigg] \right\} }{M^2 \epsilon_n^2/4} + P(\chi_{k_n}^2 > M^2 n \epsilon_n^2/4)  +  \bbP_{X}(A_n^c), \label{eq:t1n_gen} \\
& T_{2n} = \bbE_0 \Pi(\norm{f - f^t}_{2, \rho} > M \epsilon_n \mid Y, X). \label{eq:t2n_gen}
\end{align}}
In \eqref{eq:t1n_gen}, $\chi_{r}^2$ denotes a $\chi^2$ random variable with $r$ degrees of freedom and $A_n \subset \m X^n$ is any set in the $\sigma$-field generated by $X_1, \ldots, X_n$. 
\end{theorem}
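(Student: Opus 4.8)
The plan is to pass from \eqref{eq:rates_gp_gen} to a finite-dimensional statement about the tGP posterior and then convert that into the $W_2$-quantity in $T_{1n}$ via an optimal coupling. Write the Karhunen--Lo\`eve draw as $f = f^t + (f - f^t)$, with $f^t = \sigma\sum_{j=1}^{k_n}\sqrt{\lambda_j}Z_j\phi_j$ its truncation and $\theta^t = (\theta_1,\dots,\theta_{k_n})^\T$, $\theta_j = \sigma\sqrt{\lambda_j}Z_j$, its coefficient vector. Orthonormality of $\{\phi_j\}$ in $L^2_\rho(\m X)$ and assumption $\mathrm{(T1)}$ give the Pythagorean identity $\norm{f^t - f_0}_{2,\rho}^2 = \norm{\theta^t - \theta_0^t}^2 + \norm{f_0^t - f_0}_{2,\rho}^2$ with $\norm{f_0^t - f_0}_{2,\rho}\lesssim\epsilon_n$; hence by the triangle inequality, for $M$ large there is a universal $c_0>0$ with
\[
\{\norm{f - f_0}_{2,\rho} > M\epsilon_n\}\ \subseteq\ \{\norm{\theta^t - \theta_0^t} > c_0 M\epsilon_n\}\ \cup\ \{\norm{f - f^t}_{2,\rho} > c_0 M\epsilon_n\}.
\]
Taking $\bbE_0$, the second event contributes a term of the form $T_{2n}$ (the constant in the threshold absorbed into $M$), so it remains to bound $\bbE_0\,\Pi(\norm{\theta^t - \theta_0^t} > c_0 M\epsilon_n\mid Y,X)$ by $T_{1n}$.

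The delicate point is that the GP posterior of $\theta^t$ is \emph{not} the tGP posterior $\wt{\m W}^t$, since the Karhunen--Lo\`eve tail also helps explain $Y$. As $\theta^t$ and the tail are a priori independent and $Y = \Phi\theta^t + g + \varepsilon$ with $g = \big((f-f^t)(X_1),\dots,(f-f^t)(X_n)\big)^\T$, conditioning on $g$ reduces the inference for $\theta^t$ to the regression $Y - g = \Phi\theta^t + \varepsilon$, whose posterior is $\Gauss_{k_n}\!\big(\wt\theta - (\Phi^\T\Phi + \Lambda^{-1})^{-1}\Phi^\T g,\ \wt\Sigma\big)$, i.e.\ $\wt{\m W}^t$ translated by $-(\Phi^\T\Phi + \Lambda^{-1})^{-1}\Phi^\T g$. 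On a design event $A_n$ in the $\sigma$-field of $X_1,\dots,X_n$ on which $\Phi^\T\Phi$ is well conditioned and empirical second moments are comparable to their $\rho$-expectations, this translation has Euclidean norm $\lesssim\norm{f - f^t}_{2,\rho}$, so that $\Pi(\norm{\theta^t - \theta_0^t} > c_0 M\epsilon_n\mid Y,X)\le\wt{\m W}^t(\norm{\theta^t - \theta_0^t} > c_1 M\epsilon_n\mid Y,X) + (\text{a term}\lesssim T_{2n})$; off $A_n$ every posterior probability is at most $1$, producing the $\bbP_X(A_n^c)$ in $T_{1n}$.

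The heart of the argument is to control $\wt{\m W}^t(\norm{\theta^t - \theta_0^t} > c_1 M\epsilon_n\mid Y,X)$ on $A_n$ by the $W_2$ distance to the reference Gaussian $Q_\star := \Gauss_{k_n}\!\big(\theta_0^t,(\sigma^2/n)\mr I_{k_n}\big)$. Conditionally on $(X,Y)$, choose an optimal $W_2$-coupling $(U,V)$ of $\wt{\m W}^t(\cdot\mid Y,X)$ and $Q_\star$, so that $\bbE\big[\norm{U - V}^2\mid X,Y\big] = \d_{W,2}^2\big(\wt{\m W}^t(\cdot\mid Y,X),Q_\star\big)$. From $\norm{U - \theta_0^t}\le\norm{U - V} + \norm{V - \theta_0^t}$,
\[
\wt{\m W}^t(\norm{\theta^t - \theta_0^t} > c_1 M\epsilon_n\mid Y,X)\ \le\ \bbP\big(\norm{U - V} > c_1 M\epsilon_n/2\mid X,Y\big)\ +\ \bbP\big(\norm{V - \theta_0^t} > c_1 M\epsilon_n/2\mid X,Y\big).
\]
Markov's inequality bounds the first term by $\frac{4\,\d_{W,2}^2\big(\wt{\m W}^t(\cdot\mid Y,X),Q_\star\big)}{c_1^2 M^2\epsilon_n^2}$, which after multiplying by $\ind_{A_n}(X)$ and taking $\bbE_0$ is (up to the constant in $M$) the first term of $T_{1n}$; the second term equals the deterministic probability $\bbP\big(\chi_{k_n}^2 > c_2 M^2 n\epsilon_n^2\big)$, since $(n/\sigma^2)\norm{V - \theta_0^t}^2\sim\chi_{k_n}^2$, and as $A_n$ is $X$-measurable this survives $\bbE_0$ as the $\chi^2$ term of $T_{1n}$. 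Collecting the pieces gives \eqref{eq:rates_gp_gen}.

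The step I expect to be the main obstacle is the second one: showing, for a usefully large $A_n$, that the discrepancy between the GP posterior of $\theta^t$ and $\wt{\m W}^t$ --- carried entirely by the translation $(\Phi^\T\Phi + \Lambda^{-1})^{-1}\Phi^\T g$ --- is controlled with high posterior probability by $\norm{f - f^t}_{2,\rho}$ and can thus be charged to $T_{2n}$. This demands simultaneous control of the conditioning of $\Phi^\T\Phi/n$ and of the ratio $\sum_{i=1}^n (f-f^t)(X_i)^2\big/\norm{f - f^t}_{2,\rho}^2$ uniformly over Karhunen--Lo\`eve tails, which is precisely why $A_n$ is left as a free parameter in the statement, to be specialized (together with a bound on $\bbP_X(A_n^c)$) in the concrete examples of Section 4; by contrast the Pythagorean/triangle and Markov/$\chi^2$ steps are routine.
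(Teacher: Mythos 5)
Your skeleton coincides with the paper's proof at its two main joints: the triangle-inequality reduction of $\{\norm{f-f_0}_{2,\rho}>M\epsilon_n\}$ to a leading-coefficient event plus the truncation event, and then, for the leading-coefficient event, exactly the paper's coupling device — an optimal $W_2$ coupling $(U,V)$ of $\wt{\m W}^t(\cdot\mid Y,X)$ and $\Gauss_{k_n}(\theta_0^t,\sigma^2\mr I_{k_n}/n)$, a union bound, Markov's inequality for $\norm{U-V}$, and the exact $\chi^2_{k_n}$ identity for $\norm{V-\theta_0^t}$, with $\ind_{A_n}(X)$ and $\bbP_X(A_n^c)$ entering in the same way.

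The genuine gap is the step you interpose between these two joints, which is absent from the paper and which you yourself flag as the main obstacle. The paper does not perform any conditioning-on-the-tail correction: its proof passes directly, by orthonormality of the $\phi_j$, from $\Pi(\norm{f^t-f_0^t}_{2,\rho}>M\epsilon_n\mid Y,X)$ to $\wt{\m W}^t(\norm{\theta^t-\theta_0^t}>M\epsilon_n\mid Y,X)$, so that all truncation effects are carried by $T_{2n}$, the bound holds for an arbitrary $X$-measurable $A_n$, and the explicit constants $M^2\epsilon_n^2/4$ and $M^2 n\epsilon_n^2/4$ of \eqref{eq:t1n_gen} come out exactly. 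Your substitute does not close: conditionally on the tail $g$, the posterior of $\theta^t$ is indeed $\wt{\m W}^t$ shifted by $-(\Phi^{\T}\Phi+\Lambda^{-1})^{-1}\Phi^{\T}g$, but on any design event this shift has size of order $\norm{g}/\sqrt{n}$ with $\norm{g}^2=\sum_{i=1}^n\{(f-f^t)(X_i)\}^2$ an \emph{empirical} norm of a posterior-random, unbounded function; no event $A_n$ measurable with respect to $X_1,\ldots,X_n$ alone can force $\norm{g}^2\lesssim n\norm{f-f^t}_{2,\rho}^2$ uniformly over Karhunen--Lo\`eve tails — this empirical-versus-$L^2_{\rho}$ comparison is exactly the difficulty the paper cites as motivation for Theorem \ref{thm:denom}. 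Consequently the residual term you propose to ``charge to $T_{2n}$'' is the posterior probability of an empirical-norm event, which is not of the form \eqref{eq:t2n_gen}, and your final bound would also require a specially constructed $A_n$ and unspecified constants $c_0,c_1,c_2$, whereas the statement is asserted for every $A_n$ with the stated constants. (Your underlying concern — that under the full GP posterior the law of $\theta^t$ is not literally $\wt{\m W}^t$ because the tail also enters the likelihood — is a real subtlety that the paper's one-line identification passes over; but the paper's proof is built on that identification, and it is precisely where you replaced it that your argument remains open.)
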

It immediately follows from \eqref{eq:rates_gp_gen} that for a given $\epsilon_n$, if the sequences $T_{1n}, T_{2n} \to 0$, then $\epsilon_n$ is an upper bound to the posterior contraction rate \cite{ghosal2000convergence} in the $L^2_{\rho}$ norm; note that no assumptions regarding the support of the covariate density $\rho$ is made. Theorem \ref{thm:rd_gp_gen} thus relates the posterior contraction rate of a GP prior to (i) the speed of a posterior Wasserstein approximation of the induced tGP prior ($T_{1n}$), and (ii) the associated truncation error ($T_{2n}$). To obtain the best possible rate out of Theorem \ref{thm:rd_gp_gen}, one needs to choose the truncation level $k_n$ (and to a lesser extent the set $A_n$) in an optimal fashion. 
The role of these quantities will become more explicit once we provide manageable  bounds to $T_{1n}$ and $T_{2n}$ in the subsequent sections. To that end, we need to make additional assumptions on the eigenfunctions $\{\phi_j\}$ and eigenvalues $\{\lambda_j\}$ of the kernel $K$ stated below. 
Recall $\Lambda = \mbox{diag}(\lambda_1, \ldots, \lambda_{k_n})$ and $\Phi = (\phi_j(X_i))_{1 \le i \le n, 1 \le j \le k_n}$. Assume
\begin{description}
\item [(A1)] $\norm{\Lambda^{-1}}_2 < n/4$. 
\item [(A2)] $\sup_{x \in \m X} |\phi_j(x)| \le L_n$ for all $j = 1, \ldots, k_n$, with $L_n^2 k_n \log k_n < n$.
\end{description}
Assumption {\bf (A1)} typically implies a bound on the growth rate of $k_n$; for example, if the eigenvalues decay polynomially, $\lambda_j \asymp j^{-2 \beta}$ for some $\beta > 0$, then $\norm{\Lambda^{-1}}_2 = \lambda_{k_n}^{-1} \asymp k_n^{2 \beta}$ and hence {\bf (A1)} is satisfied for all $k_n \precsim n^{1/(2 \beta + 1)}$. Assumption {\bf (A2)} is readily satisfied if all the eigenfunctions $\phi_j$ are uniformly bounded in magnitude by a constant. However, {\bf (A2)} is more general and allows the sup-norm of the top $k_n$ eigenfunctions to increase with $n$ subject to a growth condition; note that no assumption is made regarding the trailing eigenfunctions. Allowing the sup-norm bound to grow with $n$ is important when the kernel is indexed by one or more hyper parameters which may depend on $n$. A specific illustration is provided in the context of the squared-exponential covariance kernel \eqref{eq:hermitebasis} in Section 4. It turns out a non-trivial exercise to bound the eigenfunctions \eqref{eq:hermitebasis} making the dependence on the bandwidth parameter $a$ explicit.

\subsection{{\bf Wasserstein approximations to tGP posteriors}}

%We first extend the fixed design BvM results in the previous section to the random design setting and then utilize that to obtain a rate of convergence result. 
To bound $T_{1n}$, one primarily needs a handle on the squared $W_2$ distance between the tGP posterior $\wt{\m W}^t(\cdot \mid Y, X)$ in \eqref{eq:tgp_posterior} and a Gaussian $\Gauss_{k_n}\big(\theta_0^t, \sigma^2  \mr I_{k_n}/n \big)$ distribution. Inspecting the proof of Theorem \ref{thm:rd_gp_gen}, it may seem a more obvious choice for $T_{1n}$ is 
\begin{align*}
T_{1n}^* = \bbE_0 d_{\TV} \bigg[\wt{\m W}^t(\cdot \mid Y, X), \Gauss_{k_n}\bigg(\theta_0^t, \frac{\sigma^2}{n} \mr I_{k_n}\bigg)  \bigg] + P(\chi_{k_n}^2 > M^2 n \epsilon_n^2/4),
\end{align*}
where $d_{\TV}$ denotes the total variation distance. However, such approximations in the total variation distance require a {\em prior flatness} condition \cite{bontemps2011bernstein} which is not satisfied by the tGP priors. We find that bounding the $W_2$ distance between the tGP posterior and the asymptotic Gaussian distribution is less demanding in the present setting compared to the total variation distance. However, the connection between such an approximation result in the $W_2$ distance and posterior contraction rates in the $L^2_{\rho}$ norm is not immediately clear. We devise a coupling argument to relate the two quantities in the proof of Theorem \ref{thm:rd_gp_gen}.

The $\ind_{A_n}(X)$ term in $T_{1n}$ is introduced as a technical device to control the expectation of the squared Wasserstein distance on $A_n$;  we appropriately choose $A_n$ in a way so that $A_n^c$ receives vanishingly small probability under $\bbP_X$. Before proceeding further, we settle with a choice of $A_n$ in the following Lemma \ref{lem:well_cond}.
\begin{lemma}\label{lem:well_cond}
Assume the eigenfunctions $\{\phi_j\}$ of the kernel $K$ with respect to the covariate density $\rho$ satisfy {\bf (A2)}. Define $A_n = \{ \norm{ \Phi^{\T} \Phi - n \mr I_{k_n} }_2 < n/2\}$. Then, $\bbP_X(A_n^c) < k_n e^{-C n/(k_n L_n^2)}$. 
%Then, for any $0 < t < 1$, 
%\begin{align}\label{eq:well_cond}
%\bbP_X \bigg[ \bignorm{ \Phi^{\T} \Phi - n \mr I_{k_n} }_2 >  n t \bigg] \le k_n e^{- C n t^2/(k_n L_n^2)}.
%\end{align}
%In particular, with $\bbP_X$ probability at least $1 - k_n e^{-C n/(k_n L_n^2)}$,  
%\begin{align}\label{eq:well_cond1}
%\norm{\Phi^{\T} \Phi}_2 \le 3n/2, \quad s_{\min}(\Phi^{\T} \Phi) \ge n/2, \quad \kappa(\Phi^{\T} \Phi) \le 3, \quad \tr[(\Phi^{\T} \Phi)^{-1}] \le \frac{2 k_n}{n}. 
%\end{align}
\end{lemma}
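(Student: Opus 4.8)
The plan is to recognize $\Phi^{\T}\Phi$ as a sum of $n$ i.i.d.\ rank-one positive semi-definite random matrices and to apply a concentration inequality for such sums. Write $\phi(x) = (\phi_1(x), \ldots, \phi_{k_n}(x))^{\T} \in \mb R^{k_n}$, so that $\Phi^{\T}\Phi = \sum_{i=1}^n \phi(X_i)\phi(X_i)^{\T}$. Because the $X_i$ are i.i.d.\ $\rho$ and $\{\phi_j\}$ is orthonormal in $L^2_{\rho}(\m X)$, the $(j,l)$ entry of $\bbE_X[\phi(X_i)\phi(X_i)^{\T}]$ is $\int \phi_j(x)\phi_l(x)\rho(x)\,dx = \delta_{jl}$, i.e.\ $\bbE_X[\phi(X_i)\phi(X_i)^{\T}] = \mr I_{k_n}$ and hence $\bbE_X[\Phi^{\T}\Phi] = n\,\mr I_{k_n}$. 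The event $A_n^c$ is precisely the event that some eigenvalue of $\Phi^{\T}\Phi$ lies outside $(n/2,\,3n/2)$, so it suffices to control $\lambda_{\max}(\Phi^{\T}\Phi)$ and $\lambda_{\min}(\Phi^{\T}\Phi)$ around their common mean $n$; for this I will invoke the matrix Chernoff bound for sums of independent positive semi-definite random matrices (a matrix Bernstein argument would serve equally well).

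Beyond the mean $n\,\mr I_{k_n}$, the matrix Chernoff bound only asks for a uniform almost-sure bound on $\lambda_{\max}(\phi(X_i)\phi(X_i)^{\T})$. Since this matrix is rank one, its unique nonzero eigenvalue is $\norm{\phi(X_i)}^2 = \sum_{j=1}^{k_n}\phi_j(X_i)^2$, which by {\bf (A2)} is at most $k_n L_n^2$ for every realization of $X_i$. Applying matrix Chernoff with $\mu_{\min} = \mu_{\max} = n$, per-term bound $R = k_n L_n^2$, and relative deviation $\delta = 1/2$ gives $\bbP_X(\lambda_{\max}(\Phi^{\T}\Phi) \ge 3n/2) \le k_n\, c_1^{\,n/(k_n L_n^2)}$ and $\bbP_X(\lambda_{\min}(\Phi^{\T}\Phi) \le n/2) \le k_n\, c_2^{\,n/(k_n L_n^2)}$ with absolute constants $c_1 = e^{1/2}/(3/2)^{3/2} < 1$ and $c_2 = e^{-1/2}/(1/2)^{1/2} < 1$. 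Setting $c = \max(c_1, c_2) < 1$ and taking a union bound over the two events yields $\bbP_X(A_n^c) \le 2 k_n\, e^{-c'\, n/(k_n L_n^2)}$ with $c' = -\log c > 0$.

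The universal prefactor $2$ is then absorbed into the exponent at the cost of a slightly smaller constant: since {\bf (A2)} forces $n/(k_n L_n^2) > \log k_n$, one has $2 k_n e^{-c' n/(k_n L_n^2)} \le k_n e^{-(c'/2)\, n/(k_n L_n^2)}$ as soon as $\log k_n$ exceeds a fixed absolute constant, which is the regime of interest, giving the stated bound $\bbP_X(A_n^c) < k_n e^{-C n/(k_n L_n^2)}$ with $C = c'/2$. Everything else is routine bookkeeping. The single point worth flagging is that the ambient dimension $k_n$ enters the matrix tail bound only as the multiplicative prefactor $k_n$ (not inside the exponent), so it is exactly the condition $L_n^2 k_n \log k_n < n$ encoded in {\bf (A2)}---and no polynomially stronger condition---that is needed to make the bound vanish; consequently I do not expect a genuine obstacle beyond keeping track of the constants in the Chernoff exponent.
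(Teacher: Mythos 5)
Your proof is correct, and it follows the same core strategy as the paper: write $\Phi^{\T}\Phi = \sum_{i=1}^n \phi(X_i)\phi(X_i)^{\T}$ as a sum of independent rank-one matrices, use orthonormality of $\{\phi_j\}$ in $L^2_\rho$ to get $\bbE_X \Phi^{\T}\Phi = n \mr I_{k_n}$, and use {\bf (A2)} to bound each summand's operator norm by $k_n L_n^2$ before invoking an off-the-shelf matrix concentration bound. The only difference is the choice of that bound: the paper centers the summands ($Z_i = \phi^{(i)}(\phi^{(i)})^{\T} - \mr I_{k_n}$) and applies Tropp's matrix Bernstein inequality, which requires the additional (easy) computation $\norm{\bbE Z_i^2}_2 \lesssim k_n L_n^2$ giving $\eta^2 \lesssim n k_n L_n^2$, and then handles the two-sided deviation $\norm{\Phi^{\T}\Phi - n\mr I_{k_n}}_2 > n/2$ in a single shot; you instead keep the summands positive semi-definite and apply the matrix Chernoff bound with $\mu_{\min}=\mu_{\max}=n$, $R = k_n L_n^2$, $\delta = 1/2$, which dispenses with the variance computation but forces a union bound over the $\lambda_{\max}$ and $\lambda_{\min}$ tails (your constants $c_1, c_2 < 1$ check out). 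Both routes give the exponent $Cn/(k_n L_n^2)$ with prefactor $k_n$ up to the factor $2$, which, as you note, is absorbed into the generic constant $C$; so the argument is complete and equivalent in strength to the paper's.
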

\begin{rem}\label{rem:well_cond}
On the set $A_n$, $\Phi$ satisfies 
\begin{align}\label{eq:well_cond1}
\norm{\Phi^{\T} \Phi}_2 \le 3n/2, \quad s_{\min}(\Phi^{\T} \Phi) \ge n/2, \quad \kappa(\Phi^{\T} \Phi) \le 3, \quad \tr[(\Phi^{\T} \Phi)^{-1}] \le \frac{2 k_n}{n}. 
\end{align}
\end{rem}
Lemma \ref{lem:well_cond} follows from a measure concentration phenomenon which under appropriate conditions on the summands ensures that a sum of independent symmetric random matrices is concentrated around its expectation with high probability. We can write $\Phi^{\T} \Phi = \sum_{i=1}^n \phi^{(i)} (\phi^{(i)})^{\T}$ with $\phi^{(i)} = (\phi_j(X_i))_{1\le j \le k_n} \in \mb R^{k_n}$ independent for $i = 1, \ldots, n$. Using the orthonormality of the eigenfunctions $\{\phi_j\}$, $(\bbE_X \phi^{(i)} (\phi^{(i)})^{\T})_{jl} = \int \phi_j(x) \phi_l(x) \rho(x) dx = \delta_{jl}$ and hence $\bbE_X \Phi^{\T} \Phi = n \mr I_{k_n}$. We specifically apply a version of matrix Bernstein inequality \cite{tropp2012user} to prove the concentration of $\Phi^{\T} \Phi$ around $n \mr I_{k_n}$; the proof is deferred to Section \ref{sec:pf_main}. The sup-norm bound on the eigenfunctions $\phi_j$s in Lemma \ref{lem:well_cond} is used to bound the operator norms of the matrices $\phi^{(i)} (\phi^{(i)})^{\T}$.

We are now in a position to state our approximation result in the $W_2$ distance that provides a simple bound to the first term of $T_{1n}$ in \eqref{eq:t1n_gen}. Recall $\theta_0^t, f_0^t$ from {\bf (T1)}. 
%Given an orthonormal system $\{\phi_j\}$, any $h \in L^2_{\rho}(\m X)$ can be expressed as $h = \sum_{j=1}^{\infty} h_j \phi_j$ with $h_j = \dotp{\phi_j}{h}$. The convergence of the infinite sum is to be understood in an $L^2_{\rho}$ sense, i.e., $\norm{h - \sum_{j=1}^J h_j \phi_j}_{2, \rho} \to 0$ as $J \to \infty$. See Section \ref{sec:pf_main} for a proof of Theorem \ref{thm:rd} below. 
%For example, in the case of the squared exponential kernel, we can let the covariate distribution to be standard normal. We now state our result for a general kernel first and then specialize to the squared-exponential kernel. 
\begin{theorem}\label{thm:rd}
Assume the true function $f_0$ satisfies {\bf (T1)} and the eigenfunctions $\{\phi_j\}$ and eigenvalues $\{\lambda_j\}$ of the kernel $K$ with respect to the covariate density $\rho$ satisfy {\bf (A1)} and {\bf (A2)}. Let $A_n \in \m X^n$ be the set defined in Lemma \ref{lem:well_cond}. The tGP posterior $\wt{\m W}^t(\cdot \mid Y, X)$ from \eqref{eq:tgp_posterior} satisfies
\begin{align}
\bbE_{0}  \left\{ \ind_{A_n}(X) \d_{W,2}^2 \bigg[\wt{\m W}^t(\cdot \mid Y, X), \Gauss_{k_n}\bigg(\theta_0^t, \frac{\sigma^2}{n} \mr I_{k_n}\bigg)  \bigg] \right\} \lesssim \sigma^2 \frac{k_n}{n} + \frac{\norm{\theta_0^t}_{\mb H}^2}{n} + \norm{f_0 - f_0^t}_{2, \rho}^2. \label{eq:bvm_w2_rd1}
\end{align}
%where $\theta_0^t$ and $\norm{\theta_0^t}_{\mb H}^2$ are defined in {\bf (T1)}.
\end{theorem}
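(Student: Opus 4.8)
\emph{Proof plan.} The plan is to exploit that the target law $\Gauss_{k_n}(\theta_0^t,\tfrac{\sigma^2}{n}\mr I_{k_n})$ has a \emph{scalar} covariance, which commutes with $\wt\Sigma$, so formula \eqref{eq:wstein_normal} applies verbatim and the squared $W_2$ distance splits exactly, pointwise in $(X,Y)$, as
\[
\d_{W,2}^2\Big[\wt{\m W}^t(\cdot\mid Y,X),\,\Gauss_{k_n}\big(\theta_0^t,\tfrac{\sigma^2}{n}\mr I_{k_n}\big)\Big] = \norm{\wt\theta - \theta_0^t}^2 + \norm{\wt\Sigma^{1/2} - \tfrac{\sigma}{\sqrt n}\mr I_{k_n}}_F^2 .
\]
I then bound the ``mean'' and the ``covariance'' pieces separately in $\bbE_0$ after restricting to $A_n$. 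Writing $M := \Phi^{\T}\Phi + \Lambda^{-1}$, so that $\wt\theta = M^{-1}\Phi^{\T}Y$ and $\wt\Sigma = \sigma^2 M^{-1}$, the two facts I will use throughout are: (a) on $A_n$, combining Remark \ref{rem:well_cond} with {\bf (A1)} gives $\tfrac n2\mr I_{k_n}\preceq M\preceq\tfrac{7n}{4}\mr I_{k_n}$ (add $\tfrac n2\mr I_{k_n}\preceq\Phi^{\T}\Phi\preceq\tfrac{3n}2\mr I_{k_n}$ and $0\preceq\Lambda^{-1}\preceq\tfrac n4\mr I_{k_n}$), hence $\norm{M^{-1}}_2\le 2/n$ and $\tr(M^{-1})\le 2k_n/n$; and (b) $\Phi^{\T}\Phi\preceq M$ and $\Lambda^{-1}\preceq M$ hold always, so for $C\in\{\Phi^{\T}\Phi,\Lambda^{-1}\}$ one has $M^{-1}CM^{-1}\preceq M^{-1}$, whence $\norm{M^{-1}C^{1/2}}_2^2=\norm{M^{-1}CM^{-1}}_2\le\norm{M^{-1}}_2$.

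For the mean piece I will plug in $Y=\Phi\theta_0^t + r_0 + \varepsilon$, where $r_0=(f_0(X_i)-f_0^t(X_i))_{1\le i\le n}$ collects the truncation residuals at the design points, and use the identity $M^{-1}\Phi^{\T}\Phi\,\theta_0^t-\theta_0^t=-M^{-1}\Lambda^{-1}\theta_0^t$ to obtain
\[
\wt\theta-\theta_0^t = -\,M^{-1}\Lambda^{-1}\theta_0^t \;+\; M^{-1}\Phi^{\T}r_0 \;+\; M^{-1}\Phi^{\T}\varepsilon ,
\]
a prior-shrinkage bias, a truncation bias, and a noise term. Using (b) with $C=\Lambda^{-1}$, the first term is bounded on $A_n$ by $\norm{M^{-1}\Lambda^{-1/2}}_2^2\,\norm{\Lambda^{-1/2}\theta_0^t}^2\le\tfrac2n\norm{\theta_0^t}_{\mb H}^2$; using (b) with $C=\Phi^{\T}\Phi$, the squared norm of the second term is at most $\norm{M^{-1}\Phi^{\T}}_2^2\,\norm{r_0}^2\le\tfrac2n\norm{r_0}^2$, and $\bbE_X\norm{r_0}^2=n\norm{f_0-f_0^t}_{2,\rho}^2$ by i.i.d.\ sampling of the $X_i$; and, conditioning on $X$ (on which $M,\Phi$ depend only), $\bbE_{0\mid X}\norm{M^{-1}\Phi^{\T}\varepsilon}^2=\sigma^2\tr(M^{-1}\Phi^{\T}\Phi M^{-1})\le\sigma^2\tr(M^{-1})\le 2\sigma^2 k_n/n$ on $A_n$. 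Combining via $\norm{a+b+c}^2\le 3(\norm a^2+\norm b^2+\norm c^2)$ yields $\bbE_0[\ind_{A_n}(X)\norm{\wt\theta-\theta_0^t}^2]\lesssim \sigma^2 k_n/n + \norm{\theta_0^t}_{\mb H}^2/n + \norm{f_0-f_0^t}_{2,\rho}^2$.

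For the covariance piece I will diagonalize $M$: if $\mu_1,\dots,\mu_{k_n}$ are its eigenvalues, then since the target covariance is a scalar matrix, $\norm{\wt\Sigma^{1/2}-\tfrac{\sigma}{\sqrt n}\mr I_{k_n}}_F^2=\sigma^2\sum_{j=1}^{k_n}(\mu_j^{-1/2}-n^{-1/2})^2$. By fact (a), on $A_n$ every $\mu_j\in[n/2,7n/4]$, so $|\mu_j^{-1/2}-n^{-1/2}|\lesssim n^{-1/2}$ uniformly in $j$, giving $\norm{\wt\Sigma^{1/2}-\tfrac{\sigma}{\sqrt n}\mr I_{k_n}}_F^2\lesssim\sigma^2 k_n/n$ on $A_n$, which survives multiplication by $\ind_{A_n}(X)$ and taking $\bbE_0$. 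Adding this to the mean bound gives \eqref{eq:bvm_w2_rd1}.

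The computation is essentially mechanical once \eqref{eq:wstein_normal} is invoked; the step I expect to require the most care is the Loewner-order bookkeeping on $A_n$ — in particular, recognizing that the inequalities $\Lambda^{-1}\preceq M$ and $\Phi^{\T}\Phi\preceq M$, together with the event $A_n$ and {\bf (A1)}, are exactly what collapses each of the three bias/noise contributions (and the covariance term's per-eigenvalue deviation) to $\norm{M^{-1}}_2$ times an explicit scalar, so that no further structural hypotheses on the kernel are needed. The scalar form of the target covariance is also what lets me treat the matrix square root in the covariance term coordinatewise, sidestepping any operator-monotonicity subtleties.
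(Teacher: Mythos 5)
Your proof is correct, and while it shares the paper's overall skeleton---splitting $\d_{W,2}^2$ via \eqref{eq:wstein_normal} into a mean term and a covariance term, bounding both on $A_n$, and finishing with the tower property and $\bbE_X \norm{r_0}^2 = n \norm{f_0 - f_0^t}_{2,\rho}^2$---your handling of the mean term takes a genuinely different route. The paper first passes through the intermediate estimator $\theta_Y = (\Phi^{\T}\Phi)^{-1}\Phi^{\T} Y$ and bounds $\norm{\theta_Y - \theta_0^t}$ and $\norm{\wt{\theta} - \theta_Y}$ separately; the latter requires introducing $B = (\Phi^{\T}\Phi)^{-1}\Delta\Phi^{\T}$ with $\Delta = \Lambda^{-1}(\Phi^{\T}\Phi + \Lambda^{-1})^{-1}$, showing $\norm{\Delta}_2 \lesssim 1$ on $A_n$ using {\bf (A1)}, and the identity $\theta_0^s = \theta_0^t - \Delta^{\T}\theta_0^t$ to extract the $\norm{\theta_0^t}_{\mb H}^2/n$ contribution. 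You instead expand directly, $\wt{\theta} - \theta_0^t = -M^{-1}\Lambda^{-1}\theta_0^t + M^{-1}\Phi^{\T} r_0 + M^{-1}\Phi^{\T}\varepsilon$ with $M = \Phi^{\T}\Phi + \Lambda^{-1}$ (the identity $M^{-1}\Phi^{\T}\Phi\,\theta_0^t - \theta_0^t = -M^{-1}\Lambda^{-1}\theta_0^t$ is exact), and the congruence observation $C \preceq M \Rightarrow M^{-1} C M^{-1} \preceq M^{-1}$ for $C \in \{\Lambda^{-1}, \Phi^{\T}\Phi\}$ collapses each piece to $\norm{M^{-1}}_2 \le 2/n$ times the correct scalar, with the noise term handled by $\tr(M^{-1}\Phi^{\T}\Phi M^{-1}) \le \tr(M^{-1}) \le 2k_n/n$ on $A_n$. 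This buys a cleaner argument: it avoids the $\Delta$/$\theta_0^s$ bookkeeping entirely, and it invokes {\bf (A1)} only through the upper Loewner bound $M \preceq \tfrac{7n}{4}\mr I_{k_n}$, which is convenient but not essential (the lower bound $\mu_j \ge n/2$ on $A_n$ alone already gives $|\mu_j^{-1/2} - n^{-1/2}| \le \sqrt{2}\, n^{-1/2}$ for the covariance term, which is otherwise identical to the paper's eigenvalue-wise comparison). The remaining steps---commutation of the scalar target covariance, the bound $\norm{a+b+c}^2 \le 3(\norm{a}^2 + \norm{b}^2 + \norm{c}^2)$, and taking $\bbE_X$ of the nonnegative pointwise bounds multiplied by $\ind_{A_n}$---all check out.
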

While $f_0$ is only assumed to be an element of $L^2_{\rho}(\m X)$ in Theorem \ref{thm:rd}, additional smoothness assumption can be utilized to obtain more precise bounds on the truncation error $\norm{f_0 - f_0^t}_{2, \rho}^2$ in \eqref{eq:bvm_w2_rd1}. The bound \eqref{eq:bvm_w2_rd1} indicates a typical bias-variance type tradeoff: increasing the truncation level $k_n$ will improve the truncation error $\norm{f_0 - f_0^t}_{2, \rho}^2$, however at the expense of the first two terms increasing. Typically, if $f_0$ is $\alpha$-smooth, then the first two summands contribute a $k_n/n$ factor and the truncation error is of the order $k_n^{-2 \alpha}$; with $k_n/n + k_n^{-2 \alpha}$ attaining its minimum when $k_n = n^{1/(2 \alpha + 1)}$. The $\norm{\theta_0^t}_{\mb H}^2$ term can be considered an RKHS type penalty; indeed, it is the RKHS norm of $\theta_0^t$ relative to a $\Gauss(0, \Lambda)$ distribution \cite{van2008reproducing}. 

\subsection{{\bf Handling the truncation error}}

We now focus attention on the term $T_{2n}$ in \eqref{eq:rates_gp_gen}. To this end, we rely on a standard argument in Bayesian nonparametrics: if the prior probability of a set is exponentially small, then its posterior probability converges to zero. Such an argument is commonly used to derive upper \cite{ghosal2000convergence} and lower \cite{castillo2008lower} bounds to the posterior convergence rate. However, a crucial ingredient for the above argument to work is to obtain suitable lower bounds to the log-likelihood ratio integrated with respect to the prior. The only such result that we are aware of in the random design setting is from \cite{van2011information}, who derive a bound for the empirical $L_2$ norm and then use a functional Bernstein inequality to extrapolate to the $L^2_{\rho}$ norm.  Their result requires the prior draws from the GP to be bounded with probability one, which may not be the case for non-compact covariates. In Theorem \ref{thm:denom} below, we develop a general result
to bound (with high probability) the integrated log-likelihood ratio from below by a quantity involving the prior concentration around the true function in the $L^2_{\rho}$ norm.  A proof of Theorem \ref{thm:denom} can be found in Section \ref{sec:pf_main}.
%Their result however requires a sup-norm concentration for the prior. We take a more direct approach which avoids any assumption on the sup-norm. \tcr{NEEDS MORE WORK}
 \begin{theorem}\label{thm:denom}
Recall $F = (f(X_1), \ldots, f(X_n))^{\T}, F_0 = (f_0(X_1), \ldots, f_0(X_n))^{\T}$ and $X_1, \ldots, X_n$ are independently and identically distributed according to the density $\rho$. For $\mu \in \mb R^n$, let $p_{n, \mu}(\cdot)$ denote the $\Gauss_n(\mu, \mr I_n)$ density. Let $\Pi$ be a prior on $L^2_{\rho}$ and $\tilde{\epsilon}_n \to 0$ be a sequence such that $n \tilde{\epsilon}_n^2 \to \infty$. Then, 
\begin{align}\label{eq:denom_bd}
\bbP_0 \bigg( \int \frac{p_{n, F}(Y)}{p_{n, F_0}(Y)} \Pi(df) \ge e^{-n \tilde{\epsilon}_n^2} \Pi(f: \norm{f - f_0}_{2, \rho} < \tilde{\epsilon}_n) \bigg) \ge 1 - C \frac{\log(n \tilde{\epsilon}_n^2)}{\sqrt{n \tilde{\epsilon}_n^2}}.
\end{align} 
\end{theorem}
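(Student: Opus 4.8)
The plan is to run the classical ``prior mass'' lower bound for the integrated likelihood ratio, but around the population $L^2_\rho$ ball $B_n := \{f : \norm{f-f_0}_{2,\rho} < \tilde{\epsilon}_n\}$ rather than an empirical neighbourhood of $f_0$. Conditionally on $X$, $Y = F_0 + \varepsilon$ with $\varepsilon\sim\Gauss_n(0,\mr I_n)$, so $\log\{p_{n,F}(Y)/p_{n,F_0}(Y)\} = -\tfrac12\norm{F-F_0}^2 + \dotp{\varepsilon}{F-F_0}$, whence for any $X$-measurable $S\subseteq B_n$ with $\Pi(S)>0$,
\[
\int \frac{p_{n,F}(Y)}{p_{n,F_0}(Y)}\,\Pi(df)\;\ge\;\int_{S}\exp\!\Big(-\tfrac12\norm{F-F_0}^2+\dotp{\varepsilon}{F-F_0}\Big)\Pi(df)\;\ge\;\Pi(S)\,e^{-\frac12\mathbb{G}_S+\dotp{\varepsilon}{\bar W_S}},
\]
the last step being Jensen's inequality for $\exp$ with the probability measure $\Pi(\cdot)/\Pi(S)$ on $S$; here $\mathbb{G}_S := \int_S\norm{F-F_0}^2\,\Pi(df)/\Pi(S) = \sum_{i=1}^n\bar g_S(X_i)$ with $\bar g_S(x):=\int_S(f(x)-f_0(x))^2\,\Pi(df)/\Pi(S)$, and $\bar W_S := \int_S(F-F_0)\,\Pi(df)/\Pi(S)$. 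So it suffices to choose $S$ making $-\tfrac12\mathbb{G}_S+\dotp{\varepsilon}{\bar W_S}\ge -n\tilde{\epsilon}_n^2+\log\{\Pi(B_n)/\Pi(S)\}$ hold with probability $\ge 1-C\log(n\tilde{\epsilon}_n^2)/\sqrt{n\tilde{\epsilon}_n^2}$.

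Two observations pin down the argument and isolate the difficulty. First, conditionally on $X$ the noise $\varepsilon$ is still standard Gaussian while $\bar W_S$ is $X$-measurable, so $\dotp{\varepsilon}{\bar W_S}\mid X\sim\Gauss(0,\norm{\bar W_S}^2)$; Cauchy--Schwarz gives $\bar W_{S,i}^2\le\bar g_S(X_i)$, hence $\norm{\bar W_S}^2\le\mathbb{G}_S$, and the Mills-ratio bound gives $\bbP_{0\mid X}(\dotp{\varepsilon}{\bar W_S}<-s\mid X)\le\exp(-s^2/(2\mathbb{G}_S))$. Second, over all of $B_n$, Tonelli gives $\bbE_X\mathbb{G}_{B_n}=n\int\bar g_{B_n}\rho=n\int_{B_n}\norm{f-f_0}_{2,\rho}^2\,\Pi(df)/\Pi(B_n)<n\tilde{\epsilon}_n^2$, so $\tfrac12\mathbb{G}$ is on average below the budget $n\tilde{\epsilon}_n^2$. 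Taking $S=B_n\cap\{\norm{F-F_0}^2\le(2-\tfrac32\gamma_n)n\tilde{\epsilon}_n^2\}$ for $\gamma_n\asymp\log(n\tilde{\epsilon}_n^2)/\sqrt{n\tilde{\epsilon}_n^2}$ makes $\mathbb{G}_S\le(2-\tfrac32\gamma_n)n\tilde{\epsilon}_n^2$ automatically, and $s=\tfrac14\gamma_n n\tilde{\epsilon}_n^2$ makes the conditional noise-failure probability $\exp(-c\gamma_n^2 n\tilde{\epsilon}_n^2)=\exp(-c'\log^2(n\tilde{\epsilon}_n^2))$ negligible; on the good event one then gets $-\tfrac12\mathbb{G}_S+\dotp{\varepsilon}{\bar W_S}\ge-(1-\tfrac12\gamma_n)n\tilde{\epsilon}_n^2$, so the target inequality holds as long as $\Pi(S)\ge e^{-\frac12\gamma_n n\tilde{\epsilon}_n^2}\Pi(B_n)$. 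Hence everything reduces to showing that this empirical-ball truncation retains all but a factor $1-e^{-\frac12\gamma_n n\tilde{\epsilon}_n^2}$ of $\Pi(B_n)$, with the advertised probability.

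That mass-retention estimate is the crux and where I expect the real work — and the $\log/\sqrt{\,}$ rate — to come from. The quantity $\mathbb{G}_{B_n}=\sum_i\bar g_{B_n}(X_i)$ is a sum of i.i.d.\ nonnegative variables of which, under the bare hypothesis $\Pi(L^2_\rho)=1$, only the first moment $\int\bar g_{B_n}\rho<\tilde{\epsilon}_n^2$ is at hand: a crude Markov bound then only gives constant-order probability, and a naive truncation of $\bar g_{B_n}$ does not help, so the argument must draw on more regularity (e.g.\ a bound $\int\bar g_{B_n}^2\rho\lesssim(\int\bar g_{B_n}\rho)^2$, available for the smooth priors the theorem targets) to get genuine concentration. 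The route I would attempt: dyadically slice $B_n$ into $O(\log(n\tilde{\epsilon}_n^2))$ annuli by the value of $\norm{f-f_0}_{2,\rho}$ (plus an innermost ball of radius $\tilde{\epsilon}_n/(n\tilde{\epsilon}_n^2)$ whose contribution to $\mathbb{G}$ is deterministically $o(1)$); on each annulus bound the random $\Pi$-mass deleted by an annulus-adapted empirical constraint via a second-moment argument — controlling its mean and, through $\bbP_X(\norm{F-F_0}^2>t)\le n\norm{f-f_0}_{2,\rho}^2/t$ and Cauchy--Schwarz, its variance by $\lesssim(n\tilde{\epsilon}_n^2/t)\,\Pi(\text{annulus})^2$ — then Chebyshev, the union bound over the $O(\log(n\tilde{\epsilon}_n^2))$ annuli being designed to produce the overall $\log(n\tilde{\epsilon}_n^2)/\sqrt{n\tilde{\epsilon}_n^2}$ rate. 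The delicate part, which I expect to be the main obstacle, is to tune the annulus widths and truncation levels so that the surviving truncated value of $\mathbb{G}$ is still only $(1+o(1))\,n\tilde{\epsilon}_n^2$ — i.e.\ so the exponent in \eqref{eq:denom_bd} comes out exactly $n\tilde{\epsilon}_n^2$, not a constant multiple. Combined with the Gaussian tail bound for $\dotp{\varepsilon}{\bar W_S}$, this gives \eqref{eq:denom_bd}.
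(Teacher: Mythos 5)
Your first block reproduces the opening of the paper's argument: after renormalizing the prior to the ball $B_n$, the paper also uses $\log\int\frac{p_{n,F}}{p_{n,F_0}}d\Pi\ge G_n:=\int\log\frac{p_{n,F}}{p_{n,F_0}}d\Pi$, the exact conditional Gaussianity $G_n\mid X\sim\Gauss(-\sigma_{0X}^2/2,\norm{\mu_{0X}}^2)$ (your $\mathbb{G}_{B_n}$, $\bar W_{B_n}$ are its $\sigma_{0X}^2$, $\mu_{0X}$), and Cauchy--Schwarz $\norm{\mu_{0X}}^2\le\sigma_{0X}^2$. Where you diverge is the next step, and that is where your proposal has a genuine gap: everything is made to hinge on showing that the empirical truncation $S=B_n\cap\{\norm{F-F_0}^2\le(2-\tfrac32\gamma_n)n\tilde{\epsilon}_n^2\}$ retains all but an $e^{-\frac12\gamma_n n\tilde{\epsilon}_n^2}$ fraction of $\Pi(B_n)$ with probability $1-C\log(n\tilde{\epsilon}_n^2)/\sqrt{n\tilde{\epsilon}_n^2}$, and this is never proved. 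As you yourself observe, the hypotheses supply only the first moment $\bbE_X\mathbb{G}_{B_n}\le n\tilde{\epsilon}_n^2$, which by Markov/Chebyshev yields only constant-order failure probability; the extra regularity you propose to import, $\int\bar g_{B_n}^2\rho\lesssim(\int\bar g_{B_n}\rho)^2$, is not an assumption of the theorem, which is stated for an arbitrary prior on $L^2_\rho$. The dyadic-annulus scheme is a plan rather than an argument: no variance bound for the deleted mass on an annulus is actually derived, and nothing in the sketch forces the surviving value of $\mathbb{G}$ to be $(1+o(1))n\tilde{\epsilon}_n^2$ rather than a constant multiple, which is exactly what is needed to land on the exponent $n\tilde{\epsilon}_n^2$ in \eqref{eq:denom_bd}. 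So the proposal, as written, does not prove the statement.

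For comparison, the paper avoids any empirical truncation and never attempts to make $\sigma_{0X}^2$ concentrate. After the two observations you share with it, it applies the Paley--Zygmund inequality to the tilted variable $Z_n=e^{t_nG_n}$: it lower-bounds $\bbE_0Z_n$ by Jensen via $\bbE_0G_n=-n\sigma_0^2/2\ge-n\tilde{\epsilon}_n^2/2$, upper-bounds $\bbE_0Z_n^2$ through the conditional moment generating function $\bbE_{0\mid X}e^{\lambda G_n}\le e^{-(\lambda-\lambda^2)\sigma_{0X}^2/2}$, and then chooses $t_n^2n\tilde{\epsilon}_n^2=\gamma_n$ and $\delta_n=e^{-t_nn\tilde{\epsilon}_n^2/2}$, optimizing $\gamma_n$ to produce precisely the $\log(n\tilde{\epsilon}_n^2)/\sqrt{n\tilde{\epsilon}_n^2}$ rate; the exponent comes out exactly $n\tilde{\epsilon}_n^2$ because the centering enters only through $\bbE_0G_n$. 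In other words, the ingredient missing from your argument is replaced by a second-moment argument on $e^{t_nG_n}$ rather than a high-probability bound on $\sigma_{0X}^2$ itself. I will add that your diagnosis of the obstruction is sharp: first-moment information on $\sigma_{0X}^2$ alone does not give concentration, and this same tension is the delicate point of the paper's proof too, where it resurfaces in the passage from the conditional MGF bound to its unconditional version (the direction of the Jensen step used there for $\bbE_{0X}e^{-(\lambda-\lambda^2)\sigma_{0X}^2/2}$ merits scrutiny). But noting that does not close the gap in your route; to be a proof, your proposal needs either the Paley--Zygmund-type device or an honest derivation of the mass-retention estimate from the stated hypotheses.
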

Using Theorem \ref{thm:denom} along with a standard argument (see, for example, Theorem 2.1 of \cite{ghosal2000convergence}), we can bound
\begin{align}\label{eq:t2n_bd}
T_{2n} \le \frac{\Pi (\norm{f - f^t}_{2, \rho}^2 > M^2\epsilon_n^2)}{e^{- n \tilde{\epsilon}_n^2} \Pi(\norm{f - f_0}_{2, \rho} \leq \tilde{\epsilon}_n)} + C \frac{ \log(n \tilde{\epsilon}_n^2) }{ \sqrt{n \tilde{\epsilon}_n^2} }.
\end{align}
Using Theorem \ref{thm:rd} and Theorem \ref{thm:denom}, we arrive at the following corollary to Theorem \ref{thm:rd_gp_gen}.
\begin{corollary}\label{thm:rd_gp}
Consider model \eqref{eq:npreg} with a GP prior $f \sim \mbox{GP}(0, \sigma^2 K)$. Assume the true function $f_0$ satisfies {\bf (T1)}. Let $\epsilon_n \to 0$ satisfy $n \epsilon_n^2 \to \infty$. Let $k_n < n$ be such that 
\begin{description}
\item[(C0)] The eigenfunctions $\{\phi_j\}_{j=1}^{k_n}$ and eigenvalues $\{\lambda_j\}_{j=1}^{k_n}$ of the kernel $K$ with respect to the covariate density $\rho$ satisfy {\bf (A1)} and {\bf (A2)}.

\item [(C1)] $\max\{k_n, \norm{\theta_0^t}_{\mb H}^2\} = o(n \epsilon_n^2)$.

\item [(C2)] $\norm{f_0 - f_0^t}_{2, \rho}^2 = o(\epsilon_n^2)$. 

\item [(C3)] There exists a sequence $\tilde{\epsilon}_n \to 0$ with $n \tilde{\epsilon}_n^2 \to \infty$ such that
\begin{align}\label{eq:prior_rat1}
\frac{\Pi (\norm{f - f^t}_{2, \rho}^2 > M^2\epsilon_n^2)}{e^{- n \tilde{\epsilon}_n^2} \Pi(\norm{f - f_0}_{2, \rho} \leq \tilde{\epsilon}_n)} \to 0.
\end{align}
\end{description}
Then, for a large constant $M > 0$, 
\begin{align}\label{eq:rates_gp}
\lim_{n \to \infty} \bbE_0 \Pi(\norm{f - f_0}_{2, \rho} > M \epsilon_n \mid Y, X) = 0.
\end{align}
\end{corollary}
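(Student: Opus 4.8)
The plan is to verify the hypotheses of Theorem~\ref{thm:rd_gp_gen} and then show that the two error terms $T_{1n}$ and $T_{2n}$ in \eqref{eq:rates_gp_gen} both vanish under (C0)--(C3). Since (C0) supplies {\bf (A1)} and {\bf (A2)}, and (C2) gives $\norm{f_0-f_0^t}_{2,\rho}^2 = o(\epsilon_n^2)$ and hence $\norm{f_0-f_0^t}_{2,\rho}\lesssim\epsilon_n$, all the premises of Theorem~\ref{thm:rd_gp_gen} (together with $k_n<n$, $\epsilon_n\to 0$, $n\epsilon_n^2\to\infty$) are in force, so for every $M>0$,
\begin{align*}
\bbE_0\Pi(\norm{f-f_0}_{2,\rho}>M\epsilon_n\mid Y,X)\le T_{1n}+T_{2n}.
\end{align*}
It then suffices to show $T_{1n}\to 0$ and $T_{2n}\to 0$.

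For $T_{1n}$ I would treat the three summands of \eqref{eq:t1n_gen} separately. Taking $A_n$ to be the set of Lemma~\ref{lem:well_cond} (whose hypothesis {\bf (A2)} holds by (C0)), Theorem~\ref{thm:rd} bounds the numerator of the first summand by a constant multiple of $\sigma^2 k_n/n+\norm{\theta_0^t}_{\mb H}^2/n+\norm{f_0-f_0^t}_{2,\rho}^2$; dividing by $M^2\epsilon_n^2/4$ produces a bound of order $k_n/(n\epsilon_n^2)+\norm{\theta_0^t}_{\mb H}^2/(n\epsilon_n^2)+\norm{f_0-f_0^t}_{2,\rho}^2/\epsilon_n^2$, which vanishes by (C1) (first two terms) and (C2) (third term). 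The second summand $P(\chi_{k_n}^2>M^2 n\epsilon_n^2/4)$ vanishes because (C1) gives $k_n=o(n\epsilon_n^2)$, so the threshold $M^2 n\epsilon_n^2/4$ eventually exceeds $k_n$ by an amount of order $n\epsilon_n^2\to\infty$, whence a standard Laurent--Massart type sub-exponential tail bound for $\chi^2_{k_n}$ drives the probability to $0$. Finally $\bbP_X(A_n^c)<k_n e^{-Cn/(k_nL_n^2)}\to 0$ by Lemma~\ref{lem:well_cond}, using the growth restriction $L_n^2 k_n\log k_n<n$ in {\bf (A2)}. Hence $T_{1n}\to 0$.

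For $T_{2n}$ I would invoke the bound \eqref{eq:t2n_bd}, which the excerpt already records as a consequence of Theorem~\ref{thm:denom} and the standard upper-bound argument of \cite{ghosal2000convergence}: with the sequence $\tilde\epsilon_n$ furnished by (C3), the first term on the right of \eqref{eq:t2n_bd} is precisely the ratio in \eqref{eq:prior_rat1} and so tends to $0$, while the second term $C\log(n\tilde\epsilon_n^2)/\sqrt{n\tilde\epsilon_n^2}$ tends to $0$ since $n\tilde\epsilon_n^2\to\infty$ and $t\mapsto(\log t)/\sqrt t\to 0$. Thus $T_{2n}\to 0$, and combining with the display above yields $\bbE_0\Pi(\norm{f-f_0}_{2,\rho}>M\epsilon_n\mid Y,X)\le T_{1n}+T_{2n}\to 0$, which is \eqref{eq:rates_gp}.

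The corollary is essentially a bookkeeping assembly of Theorems~\ref{thm:rd_gp_gen}, \ref{thm:rd} and \ref{thm:denom}: conditions (C1), (C2), (C3) are designed to annihilate, respectively, the $k_n/n$ plus RKHS-penalty variance, the truncation bias, and the prior-mass ratio, while (C0) only guarantees that Lemma~\ref{lem:well_cond} and Theorem~\ref{thm:rd} apply. The one step that is not pure bookkeeping is the $\chi^2_{k_n}$ concentration, which is exactly why (C1) asks for $k_n=o(n\epsilon_n^2)$ rather than merely $k_n<n$; a secondary point requiring a little care is that the exponent $n/(k_nL_n^2)$ in Lemma~\ref{lem:well_cond} must outpace $\log k_n$, which is what the $\log k_n$ factor in {\bf (A2)} provides. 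I do not anticipate any genuine obstacle beyond these routine verifications.
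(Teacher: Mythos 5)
Your proposal is correct and follows essentially the same route as the paper's own proof: bound the posterior probability by $T_{1n}+T_{2n}$ via Theorem~\ref{thm:rd_gp_gen}, control $T_{1n}$ using Theorem~\ref{thm:rd}, the chi-square tail bound under (C1), and Lemma~\ref{lem:well_cond} under (C0), and control $T_{2n}$ via the bound \eqref{eq:t2n_bd} together with (C3). The remarks on why (C1) suffices for the $\chi^2_{k_n}$ concentration and why {\bf (A2)} makes the exponent in Lemma~\ref{lem:well_cond} dominate are accurate and match the paper's (more terse) treatment.
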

\begin{proof}
The quantity in \eqref{eq:rates_gp} is bounded by $T_{1n} + T_{2n}$ from Theorem \ref{thm:rd_gp_gen}. Invoking Theorem \ref{thm:rd}, 
$$
T_{1n} \lesssim \frac{ \max\{k_n, \norm{\theta_0^t}_{\mb H}^2 \} }{n\epsilon_n^2} + \frac{\norm{f_0 - f_0^t}_{2, \rho}^2}{\epsilon_n^2} + P(\chi_{k_n}^2 > M^2 n \epsilon_n^2/4) + \bbP_X(A_n^c).
$$
The first two quantities in the above display converge to zero by {\bf (C1)} and {\bf (C2)}. By {\bf (C1)} and a standard deviation inequality for chi-square distributions, $P(\chi_{k_n}^2 > M^2 n \epsilon_n^2/4) \to 0$ for $M > 2$. 
By Lemma \ref{lem:well_cond}, $\bbP_X(A_n^c) \le k_n e^{-C n/(k_n L_n^2)} \to 0$ by {\bf (C0)}. The proof is completed using the bound \eqref{eq:t2n_bd} for $T_{2n}$.
\end{proof}
In \eqref{eq:prior_rat1}, the prior tail probability in the numerator $\Pi(\norm{f - f^t}_{2, \rho}^2 > M^2\epsilon_n^2) = \Pi(\sum_{j=k_n+1}^{\infty} \lambda_j Z_j^2 > M^2 \epsilon_n^2)$, with $Z_j$s i.i.d. $\Gauss(0, 1)$. Using a version of Bernstein's inequality for sub-exponential random variables (Proposition 5.16 of \cite{vershynin2010introduction}), one can suitably bound this probability. Second, the prior concentration in $L^2_{\rho}$ norm in the denominator $ \Pi(\norm{f - f_0}_{2, \rho} \leq \tilde{\epsilon}_n) = \Pi( \norm{\bft - \bft_0}_{\ell_2} \le \tilde{\epsilon}_n)$ with $\theta_j \sim \Gauss(0, \lambda_j)$; this can be bounded from below using Anderson's inequality (Lemma \ref{lem:anderson} in the Appendix). We provide specific illustrations of these arguments for the squared-exponential kernel below.

\section{Application to the squared-exponential kernel}\label{sec:sqexp}

As a non-trivial application of the general results in the previous section, we consider Gaussian process regression with a squared-exponential kernel $K_a(x, x') = \exp(-a^2 \norm{x - x'}^2)$; a popular choice in machine learning applications. It is well-known that the realizations of a GP with squared-exponential kernel are infinitely smooth and hence are not suitable to model rougher functions. It has only been recently understood \cite{van2007bayesian} that the parameter $a$ plays the role of an ``inverse-bandwidth'', and scaling the parameter $a$ with the sample size enables better approximation of rougher functions. \cite{van2007bayesian} motivates this from a rescaling perspective; choosing a large value of $a$ is equivalent to tracing the trajectory of a smooth process (with $a = 1$) over a larger domain, incurring more roughness. In the regression context \eqref{eq:npreg}, \cite{van2007bayesian} derived optimal posterior  convergence rates in the empirical $L^2$ norm using a rescaling $a\equiv a_n =  n^{1/(2\alpha +1)}$ where the true function is $\alpha$-smooth on a compact domain in $\mathbb{R}$.  Using a gamma prior on $a$, \cite{van2009adaptive} extended their result showing that the rate of contraction is adaptive over any $\alpha$-smooth compactly supported function.  In a more recent article, \cite{pati2015rd} extended the results in  \cite{van2007bayesian}  for integrated $L^1$ norm.  All these articles make exclusive use of the reproducing kernel Hilbert space theory from \cite{van2008reproducing} and bounds on sup-norm small-ball probabilities of Gaussian processes over compact domain \cite{kuelbs1995small,kuelbs1993metric,li1999approximation}. 

\begin{figure}[h]
\hspace{-0.35in}
\includegraphics[width=3.7in]{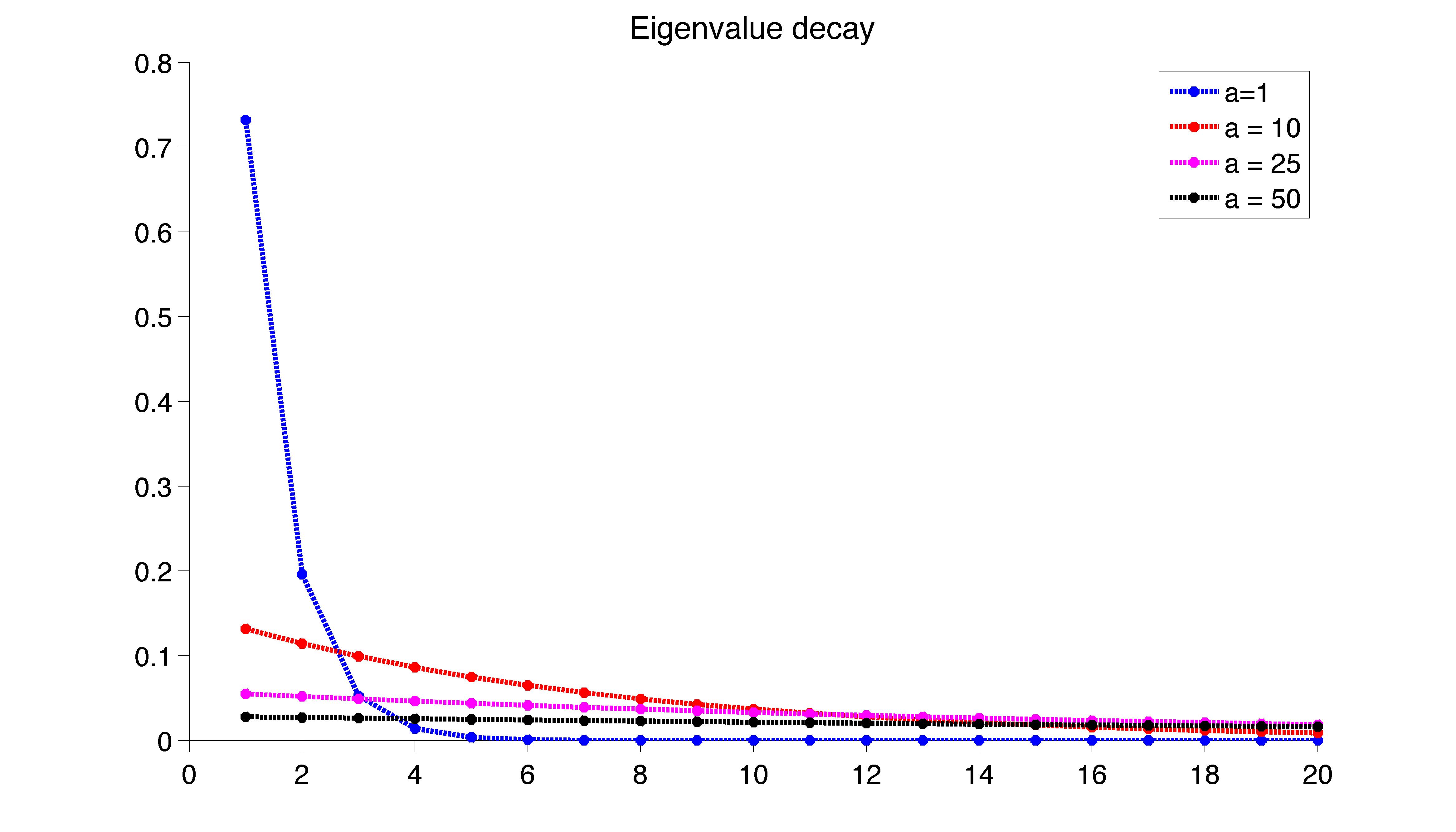}\hspace{-0.35in} 
\includegraphics[width=3.7in]{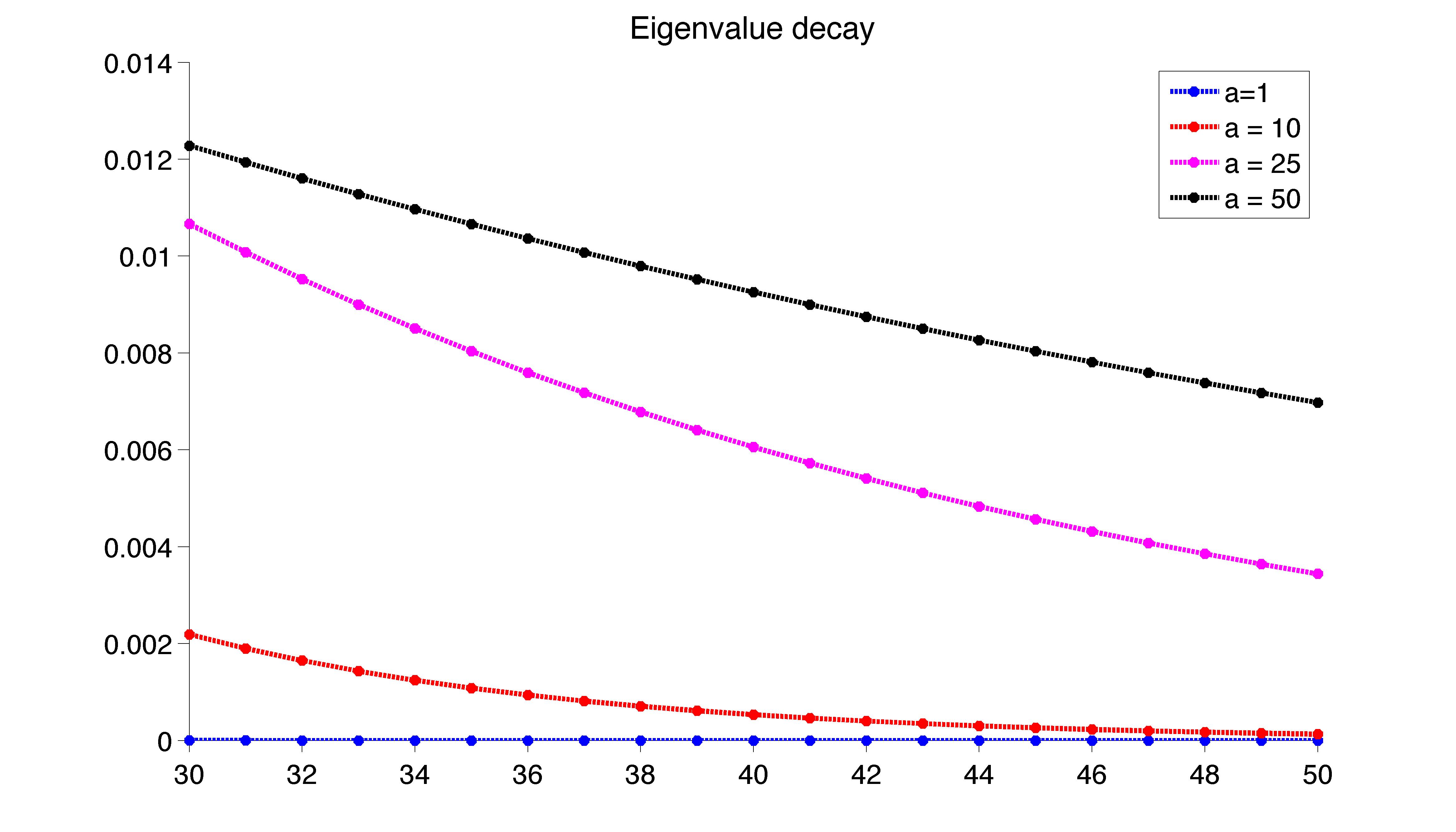}
\caption{The top $50$ eigenvalues $\lambda_j$ of the squared-exponential kernel in \eqref{eq:hermitebasis} plotted against the index $j$ for $4$ different values of $a$. Left panel: the index $j$ runs from 1 to 20. Right panel: $j$ runs from 21 to 50. With increasing $a$, the rate of decay is slowed down.} \label{fig:eigdec}
\end{figure}
The eigen-expansion of the squared-exponential kernel offers a complementary perspective into the rescaling phenomenon. Consider the expression for the eigenvalues of the squared-exponential kernel in \eqref{eq:hermitebasis}. It is well known that the rate of decay of the eigenvalues is closely connected to the smoothness of the process \eqref{eq:kar_lo}. When $a = 1$, the eigenvalues $\lambda_j$ decay exponentially fast in $j$, indicating the infinite smoothness of the sample paths. Although the rate of decay remains exponential in $j$ for any fixed value of $a$, it is effectively slowed down for large values of $a$; see Figure \ref{fig:eigdec} for an illustration. 

In this section, we apply the results developed in Section 4 (specifically Corollary \ref{thm:rd_gp}) to derive posterior rates of contraction for the above rescaled GP priors with the covariates drawn i.i.d. from a Gaussian density on the real line. To best of our knowledge, no existing posterior contraction rate result for the squared-exponential (or other) kernel allows unbounded covariate support. 
%Extending the techniques of \cite{van2007bayesian} and \cite{van2009adaptive} can be challenging, particularly in constructing test functions in $L^2_\rho$ norm and deriving sup-norm concentration bounds over the entire line. We bypass such technicalities by applying Theorem \ref{thm:rd_gp} to the present setting. 
Using a tensor-product basis approach, it is possible to extend our results to covariates in $\mb R^d$.

%\subsection{{\bf Bounds on eigenfunctions of the squared-exponential kernel}} 
\subsection{{\bf Posterior contraction rates}}

For the remainder of this Section, $\{\phi_j\}$ and $\{\lambda_j\}$ denote the eigenfunctions and eigenvalues \eqref{eq:hermitebasis} of the squared-exponential kernel with inverse-bandwidth parameter $a$; the dependence on $a$ is suppressed for notational convenience. In order to apply Theorem \ref{thm:rd_gp} to the squared-exponential kernel, we need sup-norm bounds on the $k_n$ leading eigenfunctions $\phi_j$s. Since we are concerned with rescaled processes where the parameter $a$ is sample-size dependent, it is important to precisely characterize the role of $a$ in the bound. 

A well-known inequality for the Hermite polynomial is Cramer's bound \cite{gabor1939orthogonal}, which states that for any $l \ge 1$, $|H_l(z)| \le C \sqrt{2^l \, l!} \, e^{z^2/2}$ for all $z \in \mb R$, where $C \le 2$ is a global constant which doesn't depend on $z$ or $l$. A direct use of this bound leads to $|\phi_{j+1}(x)| \lesssim (c/b)^{1/4} e^{b x^2}$, which is clearly not sufficient as we are dealing with unbounded covariates. Since the Hermite functions are polynomials, the exponential bound provided by Cramer's inequality is wasteful in the tails. We derive a bound for the leading eigenfunctions $\phi_j$s in Lemma \ref{lem:eigbd} below; refer to the Appendix for a proof. We did not find an existing reference proving this result. The main idea is to use Cramer's bound in a neighborhood of the origin, while for suitably large values of $x$, use a combination of Cramer's bound with a different bound obtained by exploiting an integral representation of the Hermite polynomials. 
\begin{lemma}\label{lem:eigbd}
Let $\phi_l$s be the eigenfunctions of the squared-exponential kernel as in \eqref{eq:hermitebasis}. Then, 
$\max_{0 \le j \le k} \sup_{x \in \mb R}|\phi_{j+1}(x)| \lesssim  a^{1/4} e^{b k/a}$ for large $a$. 
%for any $t > 0$, we have $\max_{1 \le j \le a t} \sup_{x \in \mb R}|\phi_{j+1}(x)| \lesssim  a^{1/4} e^{b t}$ for large $a$. 
\end{lemma}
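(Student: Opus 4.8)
The plan is to bound $|\phi_{j+1}(x)|$ for $0 \le j \le k$ by splitting the real line into a region near the origin and a region in the tails, using a different estimate on the Hermite polynomial in each. Recall from \eqref{eq:hermitebasis} that $\phi_{j+1}(x) = (c/b)^{1/4} \big(2^j j!\big)^{-1/2} e^{-(c-b)x^2} H_j(\sqrt{2c}\,x)$, where $c = \sqrt{b^2 + 2ba^2}$, so that for large $a$ we have $c \asymp a\sqrt{2b}$ and $c - b \asymp c$. Writing $z = \sqrt{2c}\,x$, so that $x^2 = z^2/(2c)$, the prefactor $e^{-(c-b)x^2}$ becomes roughly $e^{-z^2/2}$ (up to the harmless $e^{bx^2}$ correction, which is bounded by $e^{b k/a}$ precisely on the range of $x$ we will need), and the problem reduces to controlling $\big(2^j j!\big)^{-1/2} |H_j(z)| e^{-z^2/2}$ uniformly in $z$ and in $0 \le j \le k$.

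First, in the bulk $|z| \le R$ for a threshold $R$ to be chosen of order $\sqrt{k}$ (this corresponds to $|x|$ of order $\sqrt{k/c} \asymp \sqrt{k/a}$, which is exactly where the $e^{bx^2}$ factor is $\lesssim e^{bk/a}$), I would apply Cramer's bound $|H_j(z)| \le C\sqrt{2^j j!}\, e^{z^2/2}$ directly; this gives $\big(2^j j!\big)^{-1/2}|H_j(z)|e^{-z^2/2} \le C$, a constant. Combined with the prefactor $(c/b)^{1/4} \asymp a^{1/4}$ and the $e^{bk/a}$ from the exponential correction, this yields the claimed bound $a^{1/4}e^{bk/a}$ on this region. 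Second, for $|z| > R$, Cramer's bound is too lossy, so here I would use an integral representation of the Hermite polynomial — for instance $H_j(z) = \frac{2^j}{\sqrt{\pi}} \int_{\bbR} (z + it)^j e^{-t^2}\, dt$ — to extract genuine Gaussian decay in $z$. Bounding $|z+it|^j \le (z^2 + t^2)^{j/2}$ and carrying out the resulting Gaussian integral (or a stationary-phase / saddle-point estimate) should produce a bound of the form $|H_j(z)| \lesssim (\text{polynomial in } z,\, j) \cdot e^{z^2}$ type that, after multiplication by $e^{-z^2/2}$ and normalization by $\big(2^j j!\big)^{1/2}$ and use of $z > R \gtrsim \sqrt{k} \ge \sqrt{j}$, decays and is dominated by the bulk estimate; here one uses Stirling's formula for $j!$ to see that the worst case is $j = k$ and that the tail contribution is actually negligible compared to $a^{1/4}e^{bk/a}$.

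The main obstacle I anticipate is the tail region $|z| > R$: getting the dependence on $j$ (uniformly up to $j = k$) correct in the integral-representation bound, and verifying that the crossover at $|z| \asymp \sqrt{k}$ is consistent so that neither region dominates beyond $a^{1/4}e^{bk/a}$. In particular one must be careful that the $e^{-z^2/2}$ factor from the eigenfunction prefactor is enough to tame the $e^{z^2}$-type growth coming from $|z+it|^j e^{-t^2}$ after the $t$-integration — this is where choosing $R$ proportional to $\sqrt{k}$ (rather than a constant) is essential, since it is precisely at this scale that $|H_j(z)|e^{-z^2/2}$ transitions from oscillatory/$O(1)$ behavior to rapid decay. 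A secondary bookkeeping point is tracking the constants $b, c$ and confirming $c - b = c(1 + o(1))$ and $(c/b)^{1/4} \asymp a^{1/4}$ for large $a$, and that the leftover factor $e^{(b - 0)x^2}$ contributes no more than $e^{bk/a}$ on $|x| \lesssim \sqrt{k/a}$; these are routine given $c = \sqrt{b^2 + 2ba^2}$.
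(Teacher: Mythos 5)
Your overall architecture matches the paper's: split $\mb R$ at a threshold of order $x \asymp \sqrt{k/a}$ (equivalently $z = \sqrt{2c}\,x \asymp \sqrt{k}$), use Cramer's bound near the origin where it yields $|\phi_{j+1}(x)| \lesssim (c/b)^{1/4} e^{bx^2} \le a^{1/4}e^{bk/a}$, and use the integral representation $H_j(z) = \frac{2^j}{\sqrt{\pi}}\int (z+it)^j e^{-t^2}dt$ in the tail. The bulk half of your argument is exactly the paper's. The gap is in the tail. First, a small point: the integral representation (with $|z+it|^j \le (z^2+t^2)^{j/2}$ and maximizing the integrand) does not give a ``polynomial times $e^{z^2}$'' bound that decays after multiplying by $e^{-z^2/2}$; what it gives (this is the paper's Lemma \ref{lem:hermite_altbd}) is the purely polynomial bound $|H_j(z)| \le \sqrt{2}\, 2^j z^j$, valid only for $z^2 > j$, and all decay must then come from the prefactor $e^{-(c-b)x^2} = e^{-z^2/2}e^{bx^2}$. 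The genuine problem is quantitative and occurs near your crossover when $j \asymp k$: for $z^2 = \lambda j$ with $\lambda$ a constant, Stirling gives $(2^j j!)^{-1/2}\,2^j z^j e^{-z^2/2} \approx \exp\{(j/2)(\log(2\lambda) + 1 - \lambda)\}$, which is \emph{exponentially large in $j$} (hence in $k = at \to \infty$) unless $\lambda$ exceeds the root $\lambda_* \approx 2.68$ of $\log(2\lambda)+1=\lambda$. With the natural crossover that keeps the bulk bound at $e^{bk/a}$, namely $x^2 \le k/a$, one has $z^2 = 2c\,x^2 \approx 2\sqrt{2b}\,k$, and $2\sqrt{2b} < \lambda_*$ for all $b \lesssim 0.9$ (in particular for $b=1/4$, the case the paper emphasizes), so at that crossover your tail bound is of size $e^{c_0 k}$ for a positive absolute constant $c_0$ — nowhere near the target $a^{1/4}e^{bk/a}$, whose exponent is $O(1)$. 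Pushing the crossover out to $z^2 \ge \lambda_* k$ fixes the tail but inflates the bulk estimate to $e^{C b k/a}$ with $C = \lambda_*/(2\sqrt{2b}) > 1$, so the two requirements you state (bulk exactly $e^{bk/a}$, tail dominated by the bulk) are in tension, and your sketch does not resolve it; your assertion that for $z \gtrsim \sqrt{k}$ the tail term ``decays and is dominated by the bulk estimate'' is false as stated.

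The paper's resolution, which is the key idea missing from your proposal, is not a cleverer choice of crossover but a geometric interpolation of the two Hermite bounds on the tail region: write $|H_j(z)| = |H_j(z)|^{1-\delta}|H_j(z)|^{\delta}$, apply Cramer to the power $1-\delta$ and the polynomial bound to the power $\delta$, with $\delta$ small of order $b/c \asymp 1/a$ chosen so that $c\delta > b$ (giving net Gaussian decay $e^{-(c\delta - b)x^2}$ against $x^{j\delta}$), then maximize over $x$ and pick $\delta = be/\{c(e-2)\}$ to make the resulting ratio $c\delta/(c\delta-b)$ a fixed constant; this yields $|\phi_{j+1}(x)| \lesssim c^{1/4} e^{j\delta/2} \lesssim a^{1/4}e^{Cbk/a}$ uniformly over $x > \sqrt{k/a}$ and $j \le k$. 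If you want to salvage your two-regime argument without the interpolation, you must either take the crossover constant above $\lambda_*$ and accept a constant multiple of $b$ in the exponent, or add the interpolation step; as written, the tail estimate does not close.
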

%For fixed $b$, clearly $c = \sqrt{b^2 + 2 b a^2} \asymp a$. Therefore, $\phi_0(x) = (c/b)^{1/4} \lesssim a^{1/4}$. Lemma \ref{lem:eigbd} posits that as long as $j \le at$, we incur an additional $e^{b t}$ term only for the . 

%Note that the bound in Lemma \ref{lem:eigbd} involves the parameter $a$ in two ways: first, there is the $a^{1/4}$ term appearing in the bound, and second, there is an interplay with the basis index $l$ appearing through the ratio $l/a$. If $l \le a$, then the bound is in the order of $a^{1/4}$, while the additional $2^{2bt}$ term needs to be taken care of when $l > a$. 
%We shall see below that for $\alpha$-smooth functions, we need 

We are now in a position to state the rate theorem. 
Set $a_n = n^{1/(2 \alpha + 1)}$ in \eqref{eq:hermitebasis}. We define the true class of functions $\m F$ with ``smoothness $\alpha$'' as linear combinations of the eigenfunctions $\phi_j$ with the coefficient vector in the Sobolev class $\Theta_{\alpha}$. Formally,  
\begin{align}\label{eq:F0}
\m F = \{f_0 : f_0 = \sum_{j=1}^{\infty} \theta_{0j} \phi_j, \, \bft_0 = (\theta_{01}, \theta_{02}, \ldots ) \in \Theta_{\alpha} \}.
\end{align}
\begin{theorem}\label{thm:rd_gpsqexp}
Consider the nonparametric regression model \eqref{eq:npreg}. Assume the covariates $X_i$ are drawn i.i.d. from a Gaussian density $\rho(x) = \sqrt{2b/\pi} \, e^{-2 b x^2}$ and the true function $f_0 \in \m F$ as in \eqref{eq:F0} with $\alpha > 1/\{4(1 - 2b)\}$. Let $f \sim \mbox{GP}(0, \sigma^2 K)$ with squared-exponential covariance kernel $K_a(x, x') = \exp(-a^2 |x - x'|^2)$. Choose $a \equiv a_n = n^{1/(2 \alpha + 1)}$. Then, an upper bound to the posterior contraction rate \eqref{eq:rates_gp} in $L^2_{\rho}$ norm is $\epsilon_n = n^{-\alpha/(2 \alpha + 1)} \log n$. 
\end{theorem}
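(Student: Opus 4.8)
The plan is to invoke Corollary~\ref{thm:rd_gp} with the rescaling $a \equiv a_n = n^{1/(2\alpha+1)}$ and a suitably chosen truncation level $k_n$, and to verify its hypotheses (C0)--(C3) for the explicit Hermite eigensystem \eqref{eq:hermitebasis}. The preliminary step is to record the $n$-dependence of the eigenpairs. From \eqref{eq:hermitebasis}, writing $c = \sqrt{b^2 + 2ba^2} = a\sqrt{2b}\,(1+O(a^{-2}))$, the eigenvalues form a geometric sequence with ratio $\lambda_{j+1}/\lambda_j = a^2/(b+a^2+c) = (1+\sqrt{2b}/a)^{-1}(1+O(a^{-2}))$, so that $\lambda_j \asymp a^{-1}e^{-\sqrt{2b}\,j/a}$ for $j \le k_n = o(a^2)$; consequently $\lambda_{k_n}^{-1} \asymp a\,e^{\sqrt{2b}\,k_n/a}$ and $\sum_{j>k_n}\lambda_j \asymp e^{-\sqrt{2b}\,k_n/a}$. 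For the eigenfunctions I use the sharp bound of Lemma~\ref{lem:eigbd}, namely $L_n := \max_{1\le j\le k_n}\sup_x |\phi_j(x)| \lesssim a^{1/4}e^{bk_n/a}$.

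The truncation level is pinned down by the conflict between (A1) and the truncation-error conditions (C2)--(C3). On the one hand, $\Pi(\norm{f-f^t}_{2,\rho}^2 > M^2\epsilon_n^2) = \Pi(\sigma^2\sum_{j>k_n}\lambda_j Z_j^2 > M^2\epsilon_n^2)$ does not tend to zero unless $\sum_{j>k_n}\lambda_j \lesssim n^{-2\alpha/(2\alpha+1)}$, which forces $k_n \gtrsim \frac{2\alpha}{(2\alpha+1)\sqrt{2b}}\,a_n\log n$; on the other, (A1) asks that $\lambda_{k_n}^{-1} \asymp a_n\big(\sum_{j>k_n}\lambda_j\big)^{-1}$ stay below $n/4$, which (since $a_n/n = n^{-2\alpha/(2\alpha+1)}$) forces $\sum_{j>k_n}\lambda_j$ to be at least a constant multiple of $n^{-2\alpha/(2\alpha+1)}$, hence bounds $k_n$ above at essentially the same order. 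So $k_n \asymp a_n\log n$, with the $O(1)$ constant chosen (for $M$ large) so that both inequalities hold; this also explains why the rate must carry the extra $\log n$ factor, since (C1) requires $k_n = o(n\epsilon_n^2)$. With this $k_n$, $bk_n/a_n \asymp \frac{\alpha\sqrt{2b}}{2\alpha+1}\log n$, so $L_n^2$ is a polynomial power of $n$, and the condition $L_n^2 k_n \log k_n < n$ needed to invoke Lemma~\ref{lem:well_cond} reduces to a strict inequality between the resulting exponents. This is precisely where the smoothness restriction $\alpha > 1/\{4(1-2b)\}$ (in particular $b<1/2$) enters, and I expect it to be the main obstacle of the proof: it rests on the non-trivial eigenfunction estimate of Lemma~\ref{lem:eigbd} and on the fact that $k_n$ has already been forced to the critical scale by (A1) and (C3).

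The remaining parts of (C1) and (C2) are comparatively routine. For (C1) I bound the reproducing-kernel penalty by $\norm{\theta_0^t}_{\mb H}^2 = \sum_{j\le k_n}(j^{2\alpha}\theta_{0j}^2)\frac{j^{-2\alpha}}{\lambda_j} \le \bigl(\max_{1\le j\le k_n}\frac{j^{-2\alpha}}{\lambda_j}\bigr)\norm{\bft_0}_\alpha^2$; the map $j\mapsto j^{-2\alpha}/\lambda_j$ is log-convex, so its maximum over $\{1,\dots,k_n\}$ is attained at an endpoint, and the choice of $k_n$ makes the value at $j=k_n$ no larger than the value $\asymp a_n$ at $j=1$, giving $\norm{\theta_0^t}_{\mb H}^2 \lesssim a_n\norm{\bft_0}_\alpha^2 = o(n\epsilon_n^2)$. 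For (C2), $\norm{f_0-f_0^t}_{2,\rho}^2 = \sum_{j>k_n}\theta_{0j}^2 \le k_n^{-2\alpha}\norm{\bft_0}_\alpha^2 = o(\epsilon_n^2)$ since $k_n^{2\alpha} \gtrsim a_n^{2\alpha}(\log n)^{2\alpha} = n^{2\alpha/(2\alpha+1)}(\log n)^{2\alpha}$.

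Finally, (C3) is verified by choosing $\tilde{\epsilon}_n$ with $\tilde{\epsilon}_n^2 \asymp n^{-2\alpha/(2\alpha+1)}\log n$ (so $n\tilde{\epsilon}_n^2\asymp a_n\log n\to\infty$) and comparing numerator and denominator in \eqref{eq:prior_rat1}. The numerator is bounded via the Bernstein inequality for sub-exponential sums (Proposition~5.16 of \cite{vershynin2010introduction}) applied to $\sum_{j>k_n}\lambda_j Z_j^2$: since $\max_{j>k_n}\lambda_j = \lambda_{k_n+1} \asymp a_n^{-1}\sum_{j>k_n}\lambda_j$ while $\epsilon_n^2/\sum_{j>k_n}\lambda_j \asymp (\log n)^2$, this gives a bound of order $\exp(-cM^2 a_n(\log n)^2)$. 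The denominator is bounded below by $e^{-n\tilde{\epsilon}_n^2}\Pi(\norm{\bft-\bft_0}_{\ell_2}<\tilde{\epsilon}_n)$, and the small-ball probability is handled by Anderson's inequality (Lemma~\ref{lem:anderson}) together with decentering at a finite truncation of $\bft_0$: the centered small-ball exponent of this rescaled process is $\asymp a_n\log^2(1/\tilde{\epsilon}_n) \asymp a_n(\log n)^2$ and the shift term is $O(a_n)$, so the denominator is at least $\exp(-c'a_n(\log n)^2)$. Taking $M$ large makes the ratio vanish; hence (C0)--(C3) hold and Corollary~\ref{thm:rd_gp} yields \eqref{eq:rates_gp} with $\epsilon_n = n^{-\alpha/(2\alpha+1)}\log n$.
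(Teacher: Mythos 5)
Your proposal follows the paper's own route essentially step for step: invoke Corollary \ref{thm:rd_gp} with $a_n = n^{1/(2\alpha+1)}$ and $k_n \asymp a_n \log n$, verify (A1)--(A2) via the eigenfunction bound of Lemma \ref{lem:eigbd} (arriving at the same smoothness restriction $\alpha > 1/\{4(1-2b)\}$ from $L_n^2 k_n \log k_n < n$), bound $\norm{\theta_0^t}_{\mb H}^2 \lesssim a_n$ and the truncation bias exactly as the paper does, and check (C3) by Bernstein's inequality for the prior tail and Anderson's inequality with decentering for the prior concentration. The only step you assert rather than derive is the centered small-ball lower bound with exponent $\asymp a_n \log^2 n$, which the paper obtains by an explicit computation using the Dirichlet integral formula (Lemma \ref{lem:dir_formula}); since that is a routine calculation consistent with your stated scale, the argument is essentially the same as the paper's.
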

\begin{rem}
From \cite{van2007bayesian}, the rescaling $a_n = n^{1/(2 \alpha + 1)}$ is the optimal choice for $\alpha$ smooth functions on a compact domain and leads to the optimal rate $n^{-\alpha/(2 \alpha + 1)}$ up to a logarithmic term. Theorem \ref{thm:rd_gpsqexp} obtains a similar result for non-compact domains in a random design setting. The lower bound on the smoothness $\alpha$ is typically necessitated in random design settings; see for example, \cite{brown2002asymptotic,birge2004model}. 
In particular, when $b = 1/4$, so that $\rho$ corresponds to the standard normal density, we require $\alpha > 1/\{4(1 - 2b)\} = 1/2$. 
\end{rem}

%\section{Discussion}

%Let $b$ is the constant appearing in the definition of $\phi_j$ in  \eqref{eq:hermitebasis}. Consider the nonparametric regression model \eqref{eq:npreg} in a random design setting with $X_i \sim  N(0, 1/(4b))$ with $b < 1/(4\log 2)$.  Let $f_0 = \sum_{j=1}^{\infty} \theta_{0j} \phi_j$ with $\bft_0 = (\theta_{01}, \ldots) \in \Theta_{\alpha}$ for $\alpha > 1/\{4(1- 4b \log 2)\}$.  Then 

\section{Proof of main results}\label{sec:pf_main}

\subsection*{Proof of Theorem \ref{thm:rd_gp_gen}}

Using triangle inequality $\norm{f - f_0}_{2, \rho} \le \norm{f^t -f_0^t}_{2, \rho} + \norm{f - f^t}_{2, \rho} + \norm{f_0 - f_0^t}_{2, \rho}$, and since $\norm{f_0 - f_0^t}_{2, \rho} \lesssim \epsilon_n$ by assumption, we can bound $\Pi(\norm{f - f_0}_{2, \rho} > M \epsilon_n \mid Y, X) \le \Pi(\norm{f^t - f_0^t}_{2, \rho} > M \epsilon_n \mid Y, X) + \Pi(\norm{f - f^t}_{2, \rho} > M \epsilon_n \mid Y, X)$.  Further, using the orthonormality of the eigenfunctions, $\Pi(\norm{f^t - f_0^t}_{2, \rho} > M \epsilon_n \mid Y, X) = \wt{\m W}^t(\norm{\theta^t - \theta_0^t} > M \epsilon_n \mid Y, X)$. Therefore, taking expectation, 
\begin{align}\label{eq:basic_post}
\bbE_0 \Pi(\norm{f - f_0}_{2, \rho} > M \epsilon_n \mid Y, X) \le \bbE_0 \wt{\m W}^t(\norm{\theta^t - \theta_0^t} > M \epsilon_n \mid Y, X) + T_{2n}.
\end{align}

Let $U_n = \{ \norm{\theta^t - \theta_0^t} \le M \epsilon_n\}$. We shall show below that $\bbE_0 \wt{\m W}^t(U_n^c \mid Y, X) \le T_{1n}$, which will complete the proof of the theorem. 
For any $A_n \subset \m X^n$ in the $\sigma$-field generated by $X_1, \ldots, X_n$, bound 
\begin{align}\label{eq:indicator}
\bbE_0 \wt{\m W}^t(U_n^c \mid Y, X) \le \bbE_0 \wt{\m W}^t(U_n^c \mid Y, X) \, \ind_{A_n}(X) + \bbP_X(A_n^c),
\end{align}
%where $A_n'$ is the set in \eqref{eq:bvm_w2_rd1} with $\bbP_0(A_n'^c)  = \bbP_X(A_n'^c) \le k_n e^{-Cn/(k_n L_n^2)} = e^{-[Cn/(k_n L_n^2) - \log k_n]} \le e^{-C \log k_n}$ by {\bf (A2)}. 
We now elucidate a coupling argument to bound the $\wt{\m W}^t(U_n^c \mid Y, X)$ term in \eqref{eq:indicator}.
Given $(Y, X)$, let $(\theta_T, \theta_A) \in \mb R^{k_n} \otimes \mb R^{k_n}$ be a pair of random variables such that $\theta_T \sim Q_T \equiv \wt{\m W}^t(\cdot \mid Y, X), \theta_A \sim Q_A \equiv \Gauss(\theta_0^t, \sigma^2 \mr I_{k_n}/n)$ and $E \norm{\theta_T - \theta_A}^2 = d_{W,2}^2(Q_T, Q_A)$, where $E$ denotes an expectation with respect to the joint distribution of $(\theta_T, \theta_A)$ given $Y, X$. In other words, $(\theta_T, \theta_A) \in \mbox{joint}(Q_T, Q_A)$ are {\em optimally coupled}, i.e., the infimum in \eqref{eq:wstein_def} is attained by $(\theta_T, \theta_A)$. 
Such an optimal coupling can be always constructed in general; see \cite{givens1984class} for a a constructive proof for normal distributions. 
We then have 
\begin{align}
\wt{\m W}^t(U_n^c \mid Y, X) 
& = P(\theta_T \in U_n^c) \notag \\
& \le P(\theta_T \in U_n^c, \norm{\theta_T - \theta_A} \le M \epsilon_n/2) + P(\norm{\theta_T - \theta_A} > M \epsilon_n/2) \label{eq:bd_int1} \\
& \le P(\norm{\theta_A - \theta_0^t} > M \epsilon_n/2) + \frac{4 E \norm{\theta_T - \theta_A}^2}{M^2 \epsilon_n^2} \notag \\
& = P(\chi_{k_n}^2 > M^2 n \epsilon_n^2/4) + \frac{4 d_{W,2}^2(Q_T, Q_A)}{M^2 \epsilon_n^2} \label{eq:coupling_bd}.
\end{align}
In the above display, the first line simply uses that the marginal distribution of $\theta_T$ is $\wt{\m W}^t(\cdot \mid Y, X)$ by construction. From the first to the second line \eqref{eq:bd_int1}, we use a union bound. 
For the first term in \eqref{eq:bd_int1}, we first use triangle inequality to conclude that if $\theta_T \in U_n^c$, i.e., $\norm{\theta_T - \theta_0^t} > M \epsilon_n$, and $\norm{\theta_T - \theta_A} \le M \epsilon_n/2$, then $\norm{\theta_A - \theta_0^t} \ge \norm{\theta_T - \theta_0^t} - \norm{\theta_T - \theta_A} \ge M \epsilon_n/2$. Next, by construction, $(\theta_A - \theta_0^t) \mid Y, X \sim \Gauss(0, \sigma^2/n \mr I_{k_n})$, which implies $P(\norm{\theta_A - \theta_0^t} > M \epsilon_n/2) = P(\chi_{k_n}^2 > M^2 n \epsilon_n^2/4)$. 
For the $P(\norm{\theta_A - \theta_0^t} > M \epsilon_n/2)$ term in \eqref{eq:bd_int1}, we first use Markov's inequality, and then exploit the fact that $(\theta_T, \theta_A)$ are ``optimally coupled'', i.e., $E \norm{\theta_T - \theta_A}^2 = d_{W,2}^2(Q_T, Q_A)$. This leaves us at \eqref{eq:coupling_bd}. Finally, substituting the bound \eqref{eq:coupling_bd} in \eqref{eq:indicator}, we have 
{\small$$
\bbE_0 \wt{\m W}^t(U_n^c \mid Y, X) \le \frac{\bbE_0\left\{ \ind_{A_n}(X) \d_{W,2}^2 \bigg[\wt{\m W}^t(\cdot \mid Y, X), \Gauss_{k_n}\bigg(\theta_0^t, \frac{\sigma^2}{n} \mr I_{k_n}\bigg)  \bigg] \right\} }{M^2 \epsilon_n^2/4} + P(\chi_{k_n}^2 > M^2 n \epsilon_n^2/4)  + \bbP_X(A_n^c).
$$}
The quantity in the right hand side in the above display is $T_{1n}$, and the theorem is proved.
%Invoking Theorem \ref{thm:rd} for the first term in the display above, the proof is completed.  

\subsection*{{\bf Proof of Lemma \ref{lem:well_cond} \& Remark \ref{rem:well_cond}}}
We make use of the following version of a matrix Bernstein inequality from \cite{tropp2012user}: let $Z_i, i = 1, \ldots, n$ be a sequence of independent self-adjoint $d \times d$ matrices with $\bbE Z_i = 0$ and $\| Z_i \|_2 \leq B$ almost surely for some $B > 0$. Let $\eta^2 = \norm{ \sum_{i=1}^n \bbE Z_i^2}_2$. Then, for any $t > 0$, 
\begin{align}\label{eq:matbern_bded}
\bbP \big( \| \sum_{i=1}^n Z_i \| > t \big) \leq d \exp \bigg(- \frac{t^2/2}{\eta^2 + Bt/3} \bigg). 
\end{align} 
Set $\phi^{(i)} = (\phi_j(X_i))_{1\le j \le k_n} \in \mb R^{k_n}$ and $Z_i = \phi^{(i)} (\phi^{(i)})^{\T} - \mr I_{k_n}$, so that $\sum_{i=1}^n Z_i = \Phi^{T} \Phi - n \mr I_{k_n}$. The $Z_i$s are independent symmetric matrices with $\bbE_X Z_i = 0$, since from the orthonormality of the eiegnfunctions $\{\phi_j\}$, $(\bbE_X \phi^{(i)} (\phi^{(i)})^{\T})_{jl} = \int \phi_j(x) \phi_l(x) \rho(x) dx = \delta_{jl}$. We also have $\norm{Z_i}_2 \leq 1 + \norm{\phi^{(i)}}_2^2 = 1 + \sum_{j=1}^{k_n} |\phi_j^2(X_i)| \leq 1 + k_n L_n^2 \lesssim k_n L_n^2$. Therefore, the conditions for applying \eqref{eq:matbern_bded} are satisfied. 

We have $Z_i^2 = \norm{\phi^{(i)}}^2 \phi^{(i)} (\phi^{(i)})^{\T} - 2 \phi^{(i)} (\phi^{(i)})^{\T} + \mr I_{k_n} \prec \norm{\phi^{(i)}}_2^2  \phi^{(i)} (\phi^{(i)})^{\T} + \mr I_{k_n} \prec k_n L_n^2 \phi^{(i)} (\phi^{(i)})^{\T} + \mr I_{k_n}$, so that $\norm{ \bbE Z_i^2} \lesssim k_n L_n^2$ and hence by triangle inequality, $\eta^2 \lesssim n k_n L_n^2$. Substituting $t = n/2$ and $B = k_n L_n^2$ in \eqref{eq:matbern_bded}, we have
$$
\bbP_X( \norm{ \Phi^{\T} \Phi - n \mr I_{k_n} } > n/2 ) \le k_n \exp \bigg(- \frac{C n^2}{\eta^2 + Bn/6} \bigg) \le k_n e^{- C n/(k_n L_n^2)},
$$
since $\eta^2 + B n/6 \le n k_n L_n^2 + n k_n L_n^2/6 \le C n k_n L_n^2$ and $e^{-1/x}$ is increasing in $x$. 

Remark \ref{rem:well_cond} follows, since on $A_n$, \\
\noindent (i) using triangle inequality, $\norm{\Phi^{\T} \Phi}_2 \le 3n/2$. \\
\noindent (ii) using Lemma A.1 (ii), $s_{\min}(\Phi^{\T} \Phi) \ge n - \norm{\Phi^{\T} \Phi - n \mr I_{k_n}} \ge n/2$. \\
\noindent (iii) using (i) and (ii), $\kappa(\Phi^{\T} \Phi) \le 3$. \\
\noindent (iv) $\tr[(\Phi^{\T} \Phi)^{-1}] \le k_n \norm{(\Phi^{\T} \Phi)^{-1}}_2  = k_n/s_{\min}(\Phi^{\T} \Phi) \le 2k_n/n$.

\subsection*{{\bf Proof of Theorem \ref{thm:rd} }}
%Let $A_n'$ denote the set (in the $\sigma$-field generated by $X_1, \ldots, X_n$) appearing in \eqref{eq:well_cond1}. By {\bf (A2)}, we have from Lemma \ref{lem:well_cond} that $\bbP_X(A_n') \ge 1 - k_n e^{-C n/(k_n L_n^2)}$.
Given $Y, X$, recall that $Q_T$ and $Q_A$ respectively denote the probability measures $\wt{\m W}(\cdot \mid Y, X) \equiv \Gauss_{k_n}(\wt{\theta}, \wt{\Sigma})$ and $\Gauss_{k_n}(\theta_0^t, \sigma^2 \mr I_{k_n}/n)$. By the tower property of conditional expectation, 
\begin{align}\label{eq:tower}
\bbE_0 \big[ \ind_{A_n}(X) d_{W, 2}^2(Q_T, Q_A) \big] = \bbE_X \big[\ind_{A_n}(X) \, \bbE_{0 \mid X} d_{W, 2}^2(Q_T, Q_A) \big].
\end{align}
Since $\wt{\Sigma}$ and $\sigma^2 \mr I_{k_n}/n$ (trivially) commute, apply \eqref{eq:wstein_normal} to write
\begin{align}\label{eq:wstein_app}
d_{W,2}^2 (Q_T, Q_A) = \norm{ \wt{\theta} - \theta_0^t}^2 + \norm{ \wt{\Sigma}^{1/2} - \frac{\sigma}{\sqrt{n}} \mr I_{k_n}}_F^2.
\end{align}
Thus, 
\begin{align}\label{eq:cond_exp}
\bbE_{0 \mid X} d_{W, 2}^2(Q_T, Q_A) = \bbE_{0 \mid X} \norm{ \wt{\theta} - \theta_0^t}^2 + \norm{ \wt{\Sigma}^{1/2} - \frac{\sigma}{\sqrt{n}} \mr I_{k_n}}_F^2,
\end{align}
since the second term does not involve $Y$. We now proceed to bound each of these two terms in \eqref{eq:cond_exp} on the set $A_n$. To that end, we shall apply Lemma \ref{lem:well_cond} and in particular, the consequences of Lemma \ref{lem:well_cond} summarized in Remark \ref{rem:well_cond} multiple times below. We also make use of Lemma \ref{lem:tprod_frob} on multiple occasions. 

Recall $\wt{\theta} = (\Phi^{\T} \Phi + \Lambda^{-1})^{-1}$ and define $\theta_Y = (\Phi^{\T} \Phi)^{-1} \Phi^{\T} Y$. Using $\norm{a+b}^2 \le 2 (\norm{a}^2 + \norm{b}^2)$, bound 
\begin{align}\label{eq:w2_t1}
\bbE_{0 \mid X} (\wt{\theta} - \theta_0^t)^2 \lesssim \bbE_{0 \mid X} \norm{\wt{\theta} - \theta_Y}^2 + \bbE_{0 \mid X}\norm{\theta_Y - \theta_0^t}^2.
\end{align} 
Let us first deal with $\bbE_{0 \mid X}\norm{\theta_Y - \theta_0^t}^2$. Let $F_0 = (f_0(X_1), \ldots, f_0(X_n))^{\T}$, so that $F_0 = \bbE_{0 \mid X} Y$. By {\bf (T1)}, we can write $F_0 = \Phi \theta_0^t + R$, where $R = (R_1, \ldots, R_n)^{\T}$ with $R_i = f_0(X_i) - f_0^t(X_i)$. Write $\bbE_{0 \mid X} \norm{\theta_Y - \theta_0^t}^2 = \bbE_{0 \mid X} \norm{\theta_Y - \bbE_{0 \mid X} \theta_Y}^2 + \norm{\bbE_{0 \mid X} \theta_Y - \theta_0^t}^2$. The first term $\bbE_{0 \mid X} \norm{\theta_Y - \bbE_{0 \mid X} \theta_Y}^2 = \sigma^2 \tr[(\Phi^{\T} \Phi)^{-1}] \lesssim \sigma^2 k_n/n$ on $A_n$. For the second term, write $\bbE_{0 \mid X} \theta_Y = (\Phi^{\T} \Phi)^{-1} \Phi^{\T} F_0 = \theta_0^t + (\Phi^{\T} \Phi)^{-1} \Phi^{\T} R$, so that $ \norm{\bbE_{0 \mid X} \theta_Y - \theta_0^t}^2 = \norm{(\Phi^{\T} \Phi)^{-1} \Phi^{\T} R }^2 \le \norm{(\Phi^{\T} \Phi)^{-1} \Phi^{\T}}_2^2 \norm{R}^2$. Finally, $\norm{(\Phi^{\T} \Phi)^{-1} \Phi^{\T}}_2 \le \norm{\Phi^{\T}}_2/s_{\min}(\Phi^{\T} \Phi) = \{\norm{\Phi^{\T} \Phi}_2\}^{1/2}/s_{\min}(\Phi^{\T} \Phi)$, and the last quantity is bounded above by a constant multiple of $1/\sqrt{n}$ on $A_n$. Therefore, 
\begin{align}\label{eq:ek_nombor}
\ind_{A_n}(X) \bbE_{0 \mid X} \norm{\theta_Y - \theta_0^t}^2 \lesssim \sigma^2 k_n/n + \norm{R}^2/n.
\end{align}

We now handle the $\bbE_{0 \mid X} \norm{\wt{\theta} - \theta_Y}^2$ term in \eqref{eq:w2_t1}. Using $A_1^{-1} - A_2^{-1} = A_1^{-1} (A_2 - A_1) A_2^{-1}$, write $\theta_Y - \wt{\theta} = BY$, where $B = (\Phi^{\T} \Phi)^{-1} \Delta \Phi^{\T}$ with $\Delta = \Lambda^{-1} (\Phi^{\T} \Phi + \Lambda^{-1})^{-1}$. Using a standard result for expectations of quadratic forms, 
\begin{align}\label{eq:w2_t1a}
\bbE_{0 \mid X} \norm{\wt{\theta} - \theta_Y}^2 = \norm{B F_0 }^2 + \sigma^2 \norm{B}_F^2.
\end{align} 
So the goal now is to bound each one of $\norm{B F_0}^2$ and $\norm{B}_F^2$ on $A_n$. We have $B F_0 = (\Phi^{\T} \Phi)^{-1} \Delta (\Phi^{\T} \Phi) \theta_0^t + (\Phi^{\T} \Phi)^{-1} \Delta \Phi^{\T} R = (\Phi^{\T} \Phi)^{-1} \Lambda^{-1} \theta_0^s + (\Phi^{\T} \Phi)^{-1} \Delta \Phi^{\T} R$, where $\theta_0^s = (\Phi^{\T} \Phi + \Lambda^{-1})^{-1} (\Phi^{\T} \Phi) \theta_0^t$. Bound $\norm{B F_0}^2 \le 2 \big( \norm{(\Phi^{\T} \Phi)^{-1} \Lambda^{-1} \theta_0^s}^2 + \norm{(\Phi^{\T} \Phi)^{-1} \Delta \Phi^{\T} R}^2 \big)$. Bound $\norm{\Delta}_2 \le \norm{\Lambda^{-1}}_2/\{s_{\min}(\Phi^{\T} \Phi) - \norm{\Lambda^{-1}}_2\} \lesssim 1$ on $A_n$, since $\norm{\Lambda^{-1}}_2 < n/4$ by {\bf (A1)}. Therefore, $\norm{(\Phi^{\T} \Phi)^{-1} \Delta \Phi^{\T} R} \lesssim \norm{\Phi^{\T}}/s_{\min}(\Phi^{\T} \Phi) \norm{R} \lesssim \norm{R}/\sqrt{n}$ on $A_n$, using an argument as in the paragraph after the display \eqref{eq:w2_t1}. Next, $\norm{(\Phi^{\T} \Phi)^{-1} \Lambda^{-1} \theta_0^s} \le \big\{ \norm{\Lambda^{-1/2}}/s_{\min}(\Phi^{\T} \Phi) \big\}\norm{\Lambda^{-1/2} \theta_0^s} \le \norm{\Lambda^{-1/2} \theta_0^s}/\sqrt{n}$ on $A_n$. After some manipulation, we can write $\theta_0^s = \theta_0^t - \Delta^{\T} \theta_0^t$, so that 
\begin{align*}
& \norm{\Lambda^{-1/2} \theta_0^s} \le \norm{\Lambda^{-1/2} \theta_0^t} + \norm{\Lambda^{-1/2} \Delta^{\T} \theta_0^t} \\
& \le \norm{\Lambda^{-1/2} \theta_0^t} + \norm{\Lambda^{-1/2} (\Phi^{\T} \Phi + \Lambda^{-1})^{-1} \Lambda^{-1/2}}_2 \norm{\Lambda^{-1/2} \theta_0^t} \lesssim \norm{\Lambda^{-1/2} \theta_0^t} = \norm{\theta_0^t}_{\mb H},
\end{align*}
since $\norm{\Lambda^{-1/2} (\Phi^{\T} \Phi + \Lambda^{-1})^{-1} \Lambda^{-1/2}}_2 = \norm{\Delta}_2$, which we already know is $\lesssim 1$ on $A_n$. 
Thus, we conclude that $\norm{B F_0}^2 \lesssim \norm{R}^2/n + \norm{\theta_0^t}_{\mb H}^2/n$ on $A_n$. Finally, 
$$
\norm{B}_F^2 \le k_n \norm{B}_2^2 \le \frac{k_n \norm{\Delta}_2^2 \norm{\Phi^{\T}}_2^2}{s_{\min}^2(\Phi^{\T} \Phi)} \lesssim \frac{k_n}{n}
$$
on $A_n$, since we have already shown that $\norm{\Delta}_2 \lesssim 1$ and $\norm{\Phi^{\T}}_2/s_{\min}(\Phi^{\T} \Phi) \lesssim 1/\sqrt{n}$ on $A_n$. Substituting all the inequalities in \eqref{eq:w2_t1a},  
\begin{align}\label{eq:dui_nombor}
\ind_{A_n}(X) \bbE_{0 \mid X} \norm{\wt{\theta} - \theta_Y}^2 \lesssim \norm{R}^2/n + \norm{\theta_0^t}_{\mb H}^2/n + \sigma^2 k_n/n.
\end{align}
Substituting the inequalities obtained in \eqref{eq:ek_nombor} and \eqref{eq:dui_nombor} in \eqref{eq:w2_t1}, 
\begin{align}\label{eq:w2_t1_f}
\ind_{A_n}(X) \bbE_{0 \mid X} (\wt{\theta} - \theta_0^t)^2 \lesssim \sigma^2 \frac{k_n}{n} + \frac{\norm{\theta_0^t}_{\mb H}^2}{n} + \frac{\norm{R}^2}{n}.
\end{align}

\medskip
Now we consider the term $\norm{ \wt{\Sigma}^{1/2} - \frac{\sigma}{\sqrt{n}} \mr I_{k_n}}_F^2$ in \eqref{eq:cond_exp}. Recalling the expression of $\wt{\Sigma}$, $\wt{\Sigma}^{1/2} = \sigma (\Phi^{\T} \Phi + \Lambda^{-1})^{-1/2}$, and since $n/4 \le s_{\min}(\Phi^{\T} \Phi + \Lambda^{-1}) \le \norm{\Phi^{\T} \Phi + \Lambda^{-1}} \le 2n$ on $A_n$, all eigenvalues of $\wt{\Sigma}$ are of the form $C \sigma/\sqrt{n}$ on $A_n$. Since the squared Frobenius norm of a matrix is the sum of the squared eigenvalues, we conclude that $\norm{ \wt{\Sigma}^{1/2} - \frac{\sigma}{\sqrt{n}} \mr I_{k_n}}_F^2 \lesssim \sigma^2 k_n/n$ on $A_n$. This, in conjunction with \eqref{eq:w2_t1_f}, when substituted in \eqref{eq:cond_exp} yield
\begin{align}\label{eq:cond_exp2}
\ind_{A_n}(X) \, \bbE_{0 \mid X} d_{W, 2}^2 (Q_T, Q_A) \le \sigma^2 \frac{k_n}{n} + \frac{\norm{\theta_0^t}_{\mb H}^2}{n} + \frac{\norm{R}^2}{n}.
\end{align} 
Recall from \eqref{eq:tower} that our objective is bound the $\bbE_X$ expectation of the left hand side of \eqref{eq:cond_exp2}. The only term depending on $X$ in the right hand side of \eqref{eq:cond_exp2} is $\norm{R}^2$ and $\bbE_X \norm{R}^2 = n \norm{f_0 - f_0^t}_{2, \rho}^2$. Therefore, taking an expectation with respect to $\bbE_X$ on both sides of \eqref{eq:cond_exp2}, the conclusion follows.

% \le 1/\sqrt{n}$ on $A_n'$ by an argument similar to \eqref{eq:del_opnorm_bd}, and $\norm{\Lambda^{-1/2} \theta_0^t}^2 = \norm{\theta_0^t}_{\mb H}^2$. 

\subsection*{{\bf Proof of Theorem \ref{thm:denom} }}
Let 
\begin{align*}
D_n = \int \frac{p_{n, F}(Y)}{p_{n, F_0}(Y)} \Pi(df), \quad G_n = \int \log \bigg\{ \frac{p_{n, F}(Y)}{p_{n, F_0}(Y)}  \bigg\}\Pi(df).
\end{align*}
Following a standard argument, it is enough to show the desired lower bound on $\bbP_0(D_n \ge e^{- n \tilde{\epsilon}_n'^2})$ for any probability measure $\Pi$ supported on $\m F_n = \{ f : \norm{f - f_0}_{2, \rho} < \tilde{\epsilon}_n\}$. By Jensen's inequality, $\log D_n \ge G_n$, so that $\bbP_0(D_n \ge e^{- n \tilde{\epsilon}_n^2}) \ge \bbP_0(G_n \ge - n \tilde{\epsilon}_n^2)$. Our goal below is to bound $\bbP_0(G_n \ge - n \tilde{\epsilon}_n^2)$ from below, or equivalently, bound $\bbP_0(G_n \le - n \tilde{\epsilon}_n^2)$ from above.  

A simple calculation yields $G_n = \mu_{0X}^{\T} (Y - F_0) - \sigma_{0X}^2/2$, where $\mu_{0X}  = \int (F - F_0) \Pi(df) \in \mb R^n$ and $\sigma_{0X}^2 = \int \norm{F - F_0}^2 \Pi(df)$. 
Since $Y \sim \Gauss(F_0, \mr I_n)$, we have $G_n \mid X \sim \Gauss(-\sigma_{0X}^2/2, \norm{\mu_{0X}}^2)$. Also, the marginal expectation of $G_n$, $\bbE_0 G_n = - \bbE_{0X} \sigma_{0X}^2/2 = - n \sigma_{0}^2/2$, where $\sigma_{0}^2 = \int \norm{f - f_0}_{2, \rho}^2 \Pi(df)$. Since $\Pi$ is supported on $\m F_n$, clearly $\sigma_0^2 \le \tilde{\epsilon}_n^2$. 

The Paley--Zygmund inequality (see, for example, \cite{durrett2010probability}) states that for any non-negative random variable $Z$ with finite second moment and $\delta \in (0, 1)$, 
$P(Z \ge \delta EZ) \ge (1 - \delta)^2 (EZ)^2/(EZ^2)$. 
In particular, if $(EZ)^2/(EZ^2) \ge 1 - \gamma$ for $\gamma > 0$ small, then 
\begin{align}\label{eq:PZ1}
P(Z < \delta EZ) \le 1 - (1 - \delta)^2(1 - \gamma) \lesssim \delta + \gamma.
\end{align}
We shall invoke \eqref{eq:PZ1} with the non-negative random variable $Z_n = e^{t_n G_n}$ for some $t_n \in (0, 1/2)$ and $\delta_n \in (0, 1)$ to be chosen below. A key ingredient of such an exercise is to obtain a lower bound on $(\bbE_0 Z_n)^2/(\bbE_0 Z_n^2)$. 

By Jensen's inequality, $\bbE_0 Z_n \ge e^{t_n \bbE_0 G_n} = e^{-n t_n \sigma_0^2/2}$, which implies $(\bbE_0 Z_n)^2 \ge e^{-n t_n \sigma_0^2}$. We next need to bound $\bbE_0 Z_n^2 = \bbE_0 e^{2 t_n G_n}$ from above. Since $G_n \mid X$ is conditionally Gaussian, we have sufficient control over the moment generating function $M_{G_n}(\lambda) = \bbE_0 e^{\lambda G_n}$ for $\lambda \in (0, 1)$. Using the iterative property of conditional expectations, we can write $\bbE_0 e^{\lambda G_n} = \bbE_{0X} [ \bbE_{0\mid X}(e^{\lambda G_n})]$. Recalling $G_n \mid X \sim \Gauss(-\sigma_{0X}^2/2, \norm{\mu_{0X}}^2)$, we have 
$$
\bbE_{0\mid X}(e^{\lambda G_n}) = e^{- \lambda \sigma_{0X}^2/2} \, e^{\lambda^2 \norm{\mu_{0X}}^2/2} \le e^{-(\lambda - \lambda^2) \sigma_{0X}^2/2}, 
$$
where the second step follows since by an application of Cauchy--Schwartz inequality, $\norm{\mu_{0X}}^2 \le \sigma_{0X}^2$. Since $\lambda \in (0, 1)$, the quantity $\lambda - \lambda^2$ in the exponent is positive. Therefore, by Jensen's inequality, $\bbE_0 e^{\lambda G_n} \le e^{-(\lambda - \lambda^2) \bbE_{0X} \sigma_{0X}^2/2 } = e^{-(\lambda - \lambda^2) n \sigma_0^2/2}$. In particular, for any $t_n \in (0, 1/2)$, 
$\bbE_0 Z_n^2 = \bbE_0 e^{2 t_n G_n} \le e^{-n (t_n - 2 t_n^2) \sigma_0^2}$. Combining this bound with the previously obtained bound $(\bbE_0 Z_n)^2 \ge e^{-n t_n \sigma_0^2}$, we have $(\bbE_0 Z_n)^2/(\bbE_0 Z_n^2) \ge e^{-2 t_n^2 n \sigma_0^2} \ge e^{-2 t_n^2 n \tilde{\epsilon}_n^2}$. 

For a slowly decaying sequence $\gamma_n$ satisfying $\gamma_n \to 0$ and $\gamma_n n \tilde{\epsilon}_n^2 \to \infty$, set $t_n^2 n \tilde{\epsilon}_n^2 = \gamma_n$. For $n$ large enough so that $\gamma_n \le 1$, we have $(\bbE_0 Z_n)^2/(\bbE_0 Z_n^2) \ge e^{-\gamma_n} \ge 1 - \gamma_n$. From \eqref{eq:PZ1}, we therefore have that for any $0 < \delta_n > 1$, $\bbP_0(Z_n \le \delta_n \bbE_0 Z_n) \le \delta_n + \gamma_n$. Further, 
\begin{align}\label{eq:ZP2}
\bbP_0(Z_n < \delta_n \bbE_0 Z_n) &= \bbP_0\bigg(G_n < \frac{\log \delta_n}{t_n} + \frac{\log \bbE_0Z_n}{t_n}\bigg) \nonumber \\
& \ge \bbP_0\bigg(G_n < \frac{\log \delta_n}{t_n}  - \frac{n \tilde{\epsilon}_n^2}{2} \bigg),
\end{align}
where the inequality follows since $(\log \bbE_0 Z_n)/t_n \ge \bbE_0 G_n = - n \sigma_0^2/2 \ge - n \tilde{\epsilon}_n^2/2$. Choose $\delta_n$ so that $(\log \delta_n/t_n) = - n \tilde{\epsilon}_n^2/2$, i.e., $\delta_n = e^{- t_n n \tilde{\epsilon}_n^2/2} = e^{-C \sqrt{\gamma_n n \tilde{\epsilon}_n^2}}$. From \eqref{eq:ZP2} and the immediately preceding inequality, we therefore have 
\begin{align}\label{eq:ZP3}
\bbP_0(G_n \le - n \tilde{\epsilon}_n^2) \le \delta_n + \gamma_n = e^{-C \sqrt{\gamma_n n \tilde{\epsilon}_n^2}} + \gamma_n \le e^{-C \gamma_n \sqrt{n \tilde{\epsilon}_n^2}} + \gamma_n.
\end{align}
The sequence $\gamma_n$ is yet to be chosen; we shall do so now by optimizing the right hand side of \eqref{eq:ZP3}. Consider the function $g(x) = x + e^{-B x}$ for $B > 0$. The function attains its minimum value on $(0, \infty)$ at the point $x = \log B/B$ and the minimum value of the function is $(\log B + 1)/B$. Therefore, choose $\gamma_n = C \log (n \tilde{\epsilon}_n^2)/\sqrt{n \tilde{\epsilon}_n^2}$; note that for this choice $\gamma_n \to 0$ and $\gamma_n n \tilde{\epsilon}_n^2 \to \infty$; with this choice we have $\bbP_0(G_n \le - n \tilde{\epsilon}_n^2) \le C \log (n \tilde{\epsilon}_n^2)/\sqrt{n \tilde{\epsilon}_n^2}$.  

\subsection*{ {\bf Proof of Theorem \ref{thm:rd_gpsqexp} } }

The proof follows from an application of Corollary \ref{thm:rd_gp} to the present setting. We assume $\sigma^2 = 1$ for this proof. 
Also at the very onset, we mention that we replace $\lambda_j$ by $a_n^{-1} e^{-j/a_n}$ subsequently, since after some algebra, it can be shown that $\lambda_j \asymp a_n^{-1} e^{-j/a_n}$. Recall $a_n = n^{1/(2 \alpha + 1)}$ and choose $k_n = n^{1/(2\alpha +1)} \log \big\{ n^{2\alpha/(2\alpha +1)} \big\}$ in Corollary \ref{thm:rd_gp}. We first verify that {\bf (C0)} -- {\bf (C3)} are satisfied. 

We start with {\bf (C0)}, which requires verifying {\bf (A1)} \& {\bf (A2)}. For {\bf (A1)}, we have $\norm{\Lambda^{-1}}_2 = \lambda_{k_n}^{-1} \asymp a_n e^{k_n/ a_n} \lesssim n$ by choice of $k_n$. From Lemma \ref{lem:eigbd}, we have that for any $j = 1, \ldots, k_n$, $\sup_{x \in \mb R} |\phi_j(x)| \le a_n^{1/4} e^{b k_n/a_n}$. Setting $L_n = a_n^{1/4} e^{b k_n/a_n}$, we have $L_n^2 k_n \log k_n \lesssim n^{(4 b \alpha + 3/2)/(2 \alpha + 1)} \log n \lesssim n$ as long as $\alpha > 1/\{4(1 - 2b)\}$, verifying {\bf (A2)}. 

We next verify {\bf (C1)}. Clearly, $k_n = o(n \epsilon_n^2)$. So it remains to establish that $\norm{\theta_0^t}_{\mb H}^2 = o(n \epsilon_n^2)$. Bound 
$$
\norm{\theta_0^t}_{\mathbb{H}}^2 \asymp a_n\sum_{j=1}^{k_n} e^{j/a_n} \theta_{0j}^2 \le a_n  \norm{\bft_0}_{\alpha}^2  \max_{1 \leq j \leq k_n} e^{j/a_n} j^{-2 \alpha}.
$$ 
The function $x \to e^{x/a} x^{-2 \alpha}$ is monotonically decreasing on the interval $(0, 2 \alpha a_n)$ and monotonically increasing on $[2\alpha a_n, \infty)$. Therefore, $\max_{1 \leq j \leq k_n} e^{j/a_n} j^{-2 \alpha} \le \max_{j \in \{1, k_n\} } e^{j/a_n} j^{-2 \alpha}$. We have $1/a_n < 1$, and hence $e^{j/a_n} j^{-2 \alpha}$ evaluated at $j = 1$ can be bounded above by $e$. 
$e^{j/a_n} j^{-2 \alpha}$ evaluated at $j = k_n$ is bounded above by $n^{2\alpha/(2\alpha +1)} k_n^{-2\alpha} = o(1)$. Hence $\norm{\theta_0^t}_{\mathbb{H}}^2 /n\epsilon_n^2 \lesssim a_n/n \epsilon_n^2 \to 0$ as $n \to \infty$. 

To verify {\bf (C2)}, we 
need to show that $\norm{f_0 - f_0^t}_{2, \rho} \lesssim \epsilon_n$. Indeed, $\norm{f_0 - f_0^t}_{2, \rho}^2 = \sum_{j=1}^{k_n +1} \theta_{0j}^2  \leq k_n^{-2\alpha} \sum_{j=k_n+1}^{\infty} j^{2\alpha} \theta_{0j}^2  \leq k_n^{-2\alpha} \norm{\bft_0}_{\alpha}^2 = o(\epsilon_n^2)$. 

It now remains to verify {\bf (C3)}. As noted in the paragraph after \eqref{eq:t2n_bd}, the numerator in \eqref{eq:prior_rat1} can be expressed as $\Pi(\norm{f - f^t}_{2, \rho}^2 > M^2\epsilon_n^2) = \Pi(\sum_{j=k_n+1}^{\infty} \lambda_j Z_j^2 > M^2 \epsilon_n^2)$ with $Z_j$s i.i.d. $\Gauss(0, 1)$. Noting that 
$\sum_{j=k_n+1}^{\infty}\lambda_j  \asymp e^{-k_n/a_n} =  n^{-2\alpha/ (2\alpha +1)} \le \epsilon_n^2 $, 
\begin{eqnarray}\label{eq:centerBerns}
\Pi \bigg(\sum_{j=k_n +1}^{\infty} \lambda_j Z_j^2 > M^2\epsilon_n^2\bigg) \leq   \Pi \bigg\{\sum_{j=k_n +1}^{\infty} \lambda_j (Z_j^2-1) > M^2\epsilon_n^2/2\bigg\}. 
\end{eqnarray}
 $(Z_j^2 - 1)$s are mean-zero sub-exponential random variables. By an application of Bernstein's inequality for linear combinations of mean-zero sub-exponential random variables (Proposition 5.16 of \cite{vershynin2010introduction}), 
\begin{align}
& \Pi \bigg\{\sum_{j=k_n +1}^{\infty} \lambda_j (Z_j^2-1) > M^2\epsilon_n^2/2\bigg\} \le 2 \exp \bigg [ -C' \min \bigg\{ \frac{M^4 \epsilon_n^4}{K^2 \sum_{j=k_n+1}^{\infty} \lambda_j^2}, \frac{M^2\epsilon_n^2}{K (\max_{j > k_n} \lambda_j)} \bigg \} \bigg ]  \nonumber \\
& \le 2 \exp \bigg [ -C \min \bigg\{ a_nM^4 \epsilon_n^4 e^{2k_n / a_n}, a_nM^2 \epsilon_n^2 e^{(k_n+1)/a_n} \bigg\} \bigg]  \nonumber \\
&= 2\exp \big [ -C \min \big\{ a_nM^4\log^4 n, a_nM^2 \log^2 n\big\} \big]  
= 2 \exp ( - C M^2 n^{1/(2\alpha +1)} \log^2 n),\label{eq:prior_tail}
\end{align}
where $K,C, C'$ are global constants. The second inequality in the previous display is due to $\sum_{j=k_n+1}^{\infty} \lambda_j^2 \asymp (1/2a_n) e^{-2k_n /a_n}$ and $\max_{j > k_n} \lambda_j = (1/a_n) e^{-(k_n+1)/a_n}$. 

Next, the term in the denominator of \eqref{eq:prior_rat1}, $\Pi(\norm{f - f_0}_{2, \rho} \leq \tilde{\epsilon}_n) =  \wt{\m W}(\norm{\bft - \bft_0}_{\ell_2} \le \tilde{\epsilon}_n)$, where $\bft = (\theta_1, \theta_2, \ldots)$ with $\theta_j \sim \Gauss(0, \lambda_j)$ and $\bft_0 = (\theta_{01}, \theta_{02}, \ldots)$. Set $\tilde{\epsilon}_n = C n^{-\alpha/(2 \alpha + 1)}$ for some constant $C$. We show below that 
\begin{align}\label{eq:small_ball}
\wt{\m W}(\norm{\bft - \bft_0}_{\ell_2} \le \tilde{\epsilon}_n) \ge \exp \{ -C' n^{1/(2\alpha +1)} \log^2 n\}.
\end{align}
We now establish \eqref{eq:small_ball}. Recall $\lambda_j \asymp a_n^{-1} e^{-j/a_n}$. 
Let $\theta^t = (\theta_1, \ldots, \theta_{k_n})^{\T}$ and recall $\theta_0^t$ defined similarly. 
Then, $\wt{\m W}\big(\norm{\bft - \bft_0}_{\ell_2} < \tilde{\epsilon}_n\big) \ge \wt{\m W}\big(\norm{\theta^t - \theta_0^t}^2 \le \tilde{\epsilon}_n^2/2\big) \, \wt{\m W}\big(\sum_{j=k_n +1}^{\infty} (\theta_j - \theta_{0j})^2 \le \tilde{\epsilon}_n^2/2\big)$. Using $\sum_{j=k_n + 1}^{\infty} \theta_{0j}^2 \le \norm{\theta_0}_{\alpha}^2 k_n^{-2 \alpha} = o(\tilde{\epsilon}_n^2)$, the second term can be bounded below by $\wt{\m W}\big(\sum_{j=k_n+1}^{\infty} \theta_j^2 \le \tilde{\epsilon}_n^2/4 \big)$. By Markov's inequality, 
\begin{align}\label{eq:sb_1}
\wt{\m W}\big(\sum_{j=k_n+1}^{\infty} \theta_j^2 \le \tilde{\epsilon}_n^2/4 \big) \ge 1 - 4/\tilde{\epsilon}_n^2 \sum_{j=k_n + 1}^{\infty} E \theta_j^2 & \asymp 1 - \frac{4}{a_n \tilde{\epsilon}_n^2} \sum_{j=k_n+1}^{\infty} e^{-j/a_n} \nonumber \\
& \ge 1 - \frac{4 e^{-k_n/a_n}}{ \tilde{\epsilon}_n^2} \ge 1/2
\end{align}
for large $C$. We used above that $\sum_{j=k_n+1}^{\infty} e^{-j/a_n} \le \int_{k_n}^{\infty} e^{-x/a_n} dx = a_n e^{-k_n/a_n}$ and $e^{-k_n/a_n} = n^{-2\alpha/(2\alpha + 1)}$. Therefore, it is enough to show the bound \eqref{eq:small_ball} for $\wt{\m W}\big(\norm{\theta^t - \theta_0^t}^2 \le \tilde{\epsilon}_n^2/2\big)$. By Anderson's inequality (Lemma \ref{lem:anderson} in Appendix),
\begin{align}\label{eq:apply_and}
\wt{\m W}\big(\norm{\theta^t - \theta_0^t}^2 \le \tilde{\epsilon}_n^2/2\big) \ge e^{-\frac{1}{2} \norm{\theta_0^t}_{\mb H}^2 } \, \wt{\m W}\big(\norm{\theta_t}^2 \le \tilde{\epsilon}_n^2/2\big). 
\end{align}
We have already shown that $\norm{\theta_0^t}_{\mb H}^2 \lesssim a_n$, so that $e^{-\frac{1}{2} \norm{\theta_0^t}_{\mb H}^2 } \ge e^{-C' n^{1/(2 \alpha + 1)}}$. Therefore, suffices to bound $\wt{\m W}\big(\norm{\theta_t}^2 \le \tilde{\epsilon}_n^2/2\big)$. Recall $\theta_j^2/\lambda_j \sim \chi_1^2$, therefore $\theta_j^2$ has a density $(\sqrt{2 \pi x})^{-1} a_n e^{j/(2a_n)} \exp(-a_n e^{j/a_n} x/2) \ind_{(0, \infty)}(x)$. 
Let $d \bfx$ denote $dx_1\ldots dx_{k_n}$ in short and set $D_n = a_n/\sqrt{2 \pi}$. Then,
{\small \begin{align}
\wt{\m W}\bigg(\sum_{i=1}^{k_n} \theta_i^2 \leq \tilde{\epsilon}_n^2/2\bigg) 
&= D_n^{k_n} e^{\frac{k_n(k_n+1)}{4a_n}} \int_{\sum_{j=1}^{k_n} x_j \le \tilde{\epsilon}_n^2/2}  \prod_{j=1}^{k_n} \frac{ \exp\big(-a_n e^{j/a_n} x_j/2 \big)}{\sqrt{x_j}} \ d\bfx \notag  \\
%& \ge D^N e^{\frac{N(N+1)}{4a}} \int_{\sum_{j=1}^N x_j \le \epsilon^2/2}  \prod_{i=1}^N \frac{ \exp\big(-a e^{\kappa} x_i/2\big)}{\sqrt{x_i}} \ d\bfx \notag \\
%& = D^N e^{\frac{N(N+1)}{4a}} \bigg(\frac{\epsilon^2}{2} \bigg)^{N/2} \int_{\sum_{j=1}^N x_j \le 1} \prod_{i=1}^N \frac{ \exp\big(-a e^{\kappa} \epsilon^2 x_i/4 \big)}{\sqrt{x_i}} \ d\bfx \notag \\
& \ge \bigg(\frac{D_n \tilde{\epsilon}_n}{\sqrt{2}} \bigg)^{k_n} e^{\frac{k_n(k_n+1)}{4a_n}} \int_{\sum_{j=1}^{k_n} x_j \le 1} \exp\bigg(  - \frac{a_n e^{k_n/a_n}}{4} \sum_{j=1}^{k_n} x_j \bigg) \prod_{j=1}^{k_n} x_j^{-1/2} d\bfx \notag  \\
& = \bigg(\frac{D_n \tilde{\epsilon}_n}{\sqrt{2}} \bigg)^{k_n} e^{\frac{k_n(k_n+1)}{4a_n}}  \frac{\Gamma(1/2)^{k_n}}{\Gamma(k_n/2)} 
\int_{t = 0}^{1} \exp\bigg(  - \frac{a_n e^{k_n/a_n}}{4} t \bigg) t^{k_n/2-1} dt. \label{eq:lb_centered1}
\end{align}}
From the first to the second line, we replace $j$ by $k_n$, perform a change of variable and drop the $\tilde{\epsilon}_n$ term appearing inside the exponent as $\tilde{\epsilon}_n < 1$. The last equality follows from 
the Dirichlet integral formula (Lemma \ref{lem:dir_formula} in Appendix). Using $\Gamma(1/2) = \sqrt{\pi}$ and the standard inequality (see, for example, \cite{abramowitz1965handbook}) $\Gamma(\alpha) \le \sqrt{2 \pi e^2} e^{-\alpha} \alpha^{\alpha - 1/2}$ for $\alpha > 1$, we can simplify \eqref{eq:lb_centered1} to write
{\small \begin{align}
\wt{\m W}\bigg(\sum_{i=1}^{k_n} \theta_i^2 \leq \tilde{\epsilon}_n^2/2\bigg)  \gtrsim \sqrt{k_n} e^{\frac{k_n(k_n+1)}{4a_n}}  \bigg(\frac{a_n \tilde{\epsilon}_n^2 e}{2k_n} \bigg)^{k_n/2} \int_{t = 0}^{1} \exp\bigg(  - \frac{a_n e^{k_n/a_n}}{4} t \bigg) t^{k_n/2-1} dt. \label{eq:lb_centered2}
\end{align}}
The integral in the above display can be bounded from below by $(2e^{-1}/k_n) (a_n e^{k_n/a_n}/4)^{-k_n/2}$. Substituting this bound and simplifying, the lower bound is  
$$
\frac{1}{k_n} e^{\frac{k_n(k_n+1)}{4a_n}}  e^{-\frac{k_n^2}{2 a_n}} \bigg(\frac{2 e\tilde{\epsilon}_n^2}{k_n} \bigg)^{k_n/2} \gtrsim e^{-C' n^{1/(2 \alpha + 1)} \log ^2 n}. 
$$
Combining with \eqref{eq:apply_and}, \eqref{eq:small_ball} is proved. 

Finally, the ratio of the bounds in \eqref{eq:prior_tail} and \eqref{eq:small_ball} converge to zero by choosing $M$ large enough, completing the proof.

\section*{Appendix}
\appendix

\section{Proof of Lemma \ref{lem:eigbd}}

It suffices to show that for any $t > 0$, 
$\max_{0 \le j \le a t} \sup_{x \in \mb R}|\phi_{j+1}(x)| \lesssim  a^{1/4} e^{b t}$ for large $a$. 
For fixed $b$, clearly $c = \sqrt{b^2 + 2 b a^2} \asymp a$. Therefore, $\phi_0(x) =  (c/b)^{1/4} \lesssim a^{1/4}$, so enough to take the $\max$ over $1 \le j \le at$. 
FInally, since both $\phi_{j+1}$ and $H_j$ are symmetric functions, it suffices to consider the supremum over $x$ on $(0, \infty)$.  

The Hermite polynomials have an integral representation
\begin{align}\label{eq:integral_rep}
H_j(z) = \frac{2^j}{\sqrt{\pi}} \int_{-\infty}^{\infty} (z + i t)^j e^{-t^2} dt. 
\end{align}
For $z > 0$, using $| \int f d \mu | \le \int |f| d \mu$, we have 
\begin{align}\label{eq:integral_bd}
| H_j(z) | 
& \leq \frac{2^j}{\sqrt{\pi}} \, \int (z^2 + t^2)^{j/2} e^{-t^2} dt \nonumber \\
& =  \frac{2^{j+1} z^{j+1}}{\sqrt{\pi}} \int_{t = 0}^{\infty} (1 + t^2)^{j/2} e^{-z^2 t^2} dt. 
\end{align}
Let $g(t) = (1 + t^2)^{j/2} \, e^{- z^2 t^2/2}$; 
clearly, $\log g(t) = (j/2) \log(1 + t^2) - z^2 t^2/2$. Differentiating, $\frac{d}{dt} \log g(t) = jt/(1 + t^2) - z^2 t$. Setting $\frac{d}{dt} \log g(t) = 0$, we have $t \{(1 + t^2) - j/z^2\} = 0$. Therefore, if $z^2 > j$, $g(t)$ attains maxima at $t = 0$ with $g(0) = 1$. On the other hand, if $z^2 \leq j$, $g(t)$ attains maxima at $t = (j/z^2 - 1)^{1/2}$.  
When $z^2 > j$, bounding $g(t) \leq 1$ in \eqref{eq:integral_bd}, we get the inequality
\begin{align}\label{eq:hermite_bd2}
|H_j(z)|  & = \frac{2^{j+1} z^{j+1}}{\sqrt{\pi}} \int_{t = 0}^{\infty} g(t) e^{-z^2 t^2/2} dt \nonumber \\  
             & \leq \frac{2^{j+1} z^{j+1}}{\sqrt{\pi}} \int_{t = 0}^{\infty} e^{-z^2 t^2/2} dt \nonumber \\
             & = \frac{2^{j+1} z^{j+1}}{\sqrt{\pi}} \frac{1}{2} \, \frac{\sqrt{2 \pi}}{z} \nonumber \\
             & = \sqrt{2} \, 2^j z^j. 
\end{align}
We record this bound below:
\begin{lemma}\label{lem:hermite_altbd}
The Hermite polynomials satisfy $|H_j(z)| \le \sqrt{2} \, 2^j z^j$ whenever $z^2 > j$. 
\end{lemma}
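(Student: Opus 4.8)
The plan is to start from the integral representation of the physicist's Hermite polynomials, namely $H_j(z) = \frac{2^j}{\sqrt{\pi}} \int_{-\infty}^{\infty} (z + it)^j e^{-t^2}\, dt$, which is the identity \eqref{eq:integral_rep}. Since this is the only structural input, the whole argument reduces to estimating the resulting real integral under the standing hypothesis $z^2 > j$, which controls the shape of the integrand.

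First I would pass the modulus inside the integral: for $z > 0$ we have $|(z+it)^j| = (z^2 + t^2)^{j/2}$, so $|H_j(z)| \le \frac{2^j}{\sqrt{\pi}} \int_{-\infty}^{\infty} (z^2 + t^2)^{j/2} e^{-t^2}\, dt = \frac{2^{j+1}}{\sqrt{\pi}} \int_0^\infty (z^2 + t^2)^{j/2} e^{-t^2}\, dt$ by symmetry of the integrand. A change of variables $t = zs$ then pulls the scale out, turning the right-hand side into $\frac{2^{j+1} z^{j+1}}{\sqrt{\pi}} \int_0^\infty (1 + s^2)^{j/2} e^{-z^2 s^2}\, ds$, which is exactly the bound recorded in \eqref{eq:integral_bd}.

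Next I would split the integrand as $(1+s^2)^{j/2} e^{-z^2 s^2} = g(s)\, e^{-z^2 s^2/2}$ with $g(s) = (1+s^2)^{j/2} e^{-z^2 s^2/2}$, and show $g(s) \le 1$ on $(0,\infty)$. Differentiating $\log g(s) = \frac{j}{2}\log(1+s^2) - \frac{z^2 s^2}{2}$ gives $\frac{d}{ds}\log g(s) = s\big(\frac{j}{1+s^2} - z^2\big)$; when $z^2 > j$ the bracket is strictly negative for every $s \ge 0$, so $g$ is nonincreasing on $(0,\infty)$ and hence $g(s) \le g(0) = 1$. This is the single place the hypothesis $z^2 > j$ enters, and it is the step I would check most carefully: when $z^2 \le j$ the function $g$ acquires an interior maximum at $s = (j/z^2 - 1)^{1/2}$, so a genuinely different argument is needed in that regime (handled separately elsewhere in the paper).

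Finally, substituting $g(s) \le 1$ leaves $|H_j(z)| \le \frac{2^{j+1} z^{j+1}}{\sqrt{\pi}} \int_0^\infty e^{-z^2 s^2/2}\, ds = \frac{2^{j+1} z^{j+1}}{\sqrt{\pi}} \cdot \frac{\sqrt{2\pi}}{2z} = \sqrt{2}\, 2^j z^j$, which is the claimed inequality. I do not anticipate a real obstacle: once the integral representation is in hand the computation is elementary, and the monotonicity check for $g$ is the only nontrivial (and very short) ingredient.
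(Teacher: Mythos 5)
Your proof is correct and follows the paper's argument essentially verbatim: the same integral representation, the same reduction to $\frac{2^{j+1} z^{j+1}}{\sqrt{\pi}}\int_0^\infty (1+s^2)^{j/2} e^{-z^2 s^2}\,ds$, the same observation that $g(s)=(1+s^2)^{j/2}e^{-z^2 s^2/2}\le 1$ when $z^2>j$, and the same Gaussian integral evaluation. The only (cosmetic) difference is that you verify the sign of $(\log g)'$ directly to get monotonicity, whereas the paper locates the critical points; both yield the same bound.
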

Note that the exponential $e^{z^2/2}$ term in Cramer's bound has been replaced by a polynomial $z^j$ term. When $z^2 = j$, ignoring constants, Cramer' bound for $|H_j(z)|$ is $2^{j/2} \sqrt{j!} \, e^{j/2}$ while the same from Lemma \ref{lem:hermite_altbd} is $2^{j} j^{j/2}$. Using $j! \asymp j^{j+1/2} e^{-j}$, we see that both bounds give similar results when $z^2 \asymp j$. 

As discussed at the beginning, we now proceed to establish the bound for $|\phi_{j+1}(x)|$ for $1 \le j \le at$ and $x > 0$. For $x \in (0, \sqrt{t})$, use Cramer's bound to obtain $|\phi_{j+1}(x)| \lesssim (c/b)^{1/4} \, e^{bx^2} \lesssim a^{1/4}e^{bt}$ when $x \in (0, \sqrt{t})$. 

When $x > \sqrt{t}$, setting $z = \sqrt{2c} x$, we have $z^2 > 2ct > j$ for any $j \le a t$. Therefore, we have two bounds for $|H_j(z)|$: (i) $|H_j(z)| \lesssim \sqrt{2^j j!} \, e^{z^2}$ from Cramer's bound, and (ii) $|H_j(z)| \lesssim 2^j z^j$ from Lemma \ref{lem:hermite_altbd}. Using a combination of both delivers a tighter bound for $|\phi_{j+1}(x)|$. Let $\delta > 0$ be such that $c \delta > b$. Then, for  any such $\delta$, we may write
\begin{align}\label{eq:hermite_comb_bd}
| H_j(z)| 
& = |H_j(z) |^{1 - \delta} |H_j(z) |^{\delta} \nonumber \\
& \lesssim \big\{ 2^{j(1 - \delta)/2} (j !)^{(1 - \delta)/2} e^{c(1 - \delta) z^2} \big\} \, \big\{2^{j \delta } (2c)^{j \delta/2} z^{j \delta} \big\} \nonumber \\
& = 2^{j/2 + j \delta} (j!)^{(1 - \delta)/2} c^{j \delta/2} \, z^{j \delta} e^{c(1 - \delta) z^2}.
\end{align}
Substituting this bound in the expression for $\phi_{j+1}$, we have
\begin{align*}
| \phi_{j+1}(x) | 
& \lesssim (c/b)^{1/4} \, \frac{2^{j \delta} c^{j \delta/2} }{ (j !)^{\delta/2} } \, x^{j \delta} e^{- (c \delta - b) x^2}. 
\end{align*}
The function $x \to x^{j \delta} e^{-(c \delta - b) x^2}$ for $x > 0$ achieves its maximum at $x = [j \delta/\{2 (c \delta - b) \}]^{1/2}$. Substituting $x^2 = j \delta/\{2(c \delta - b)\}$ in the above display and bounding $j ! \geq (j/e)^j$, 
\begin{align*}
| \phi_{j+1}(x) | 
& \lesssim c^{1/4} \, \frac{2^{j \delta} c^{j \delta/2} }{ (j/e)^{\delta/2} } \bigg\{ \frac{j \delta}{2 (c \delta - b)} \bigg\}^{j \delta/2} e^{- j \delta/2} \\
& = c^{1/4} \, 2^{j \delta/2} \bigg( \frac{c \delta}{c \delta - b} \bigg)^{j \delta/2}.
\end{align*}
Now choose $\delta = be/\{c(e-2)\}$, so that $c\delta/(c \delta - b) = e/2$. Then we have $| \phi_{j+1}(x) | \lesssim c^{1/4} \, e^{j \delta/2} \lesssim a^{1/4} e^{bt}$, since $j \delta/2 < a t\delta/2 \lesssim c t \delta/2 \lesssim b t$.

\section{Some useful results} \label{app}

Some matrix inequalities. Proofs can be found in standard texts; see for example, \cite{bhatia1997matrix}. 
\begin{lemma}\label{lem:tprod_frob}
For any two matrices $A, B$, 
\begin{align}
& s_{\min}(A) \norm{B}_F \leq \norm{AB}_F \leq \norm{A}_2 \norm{B}_F \tag{i} \\
& s_{\min}(A) \norm{B}_2 \leq \norm{AB}_2 \leq \norm{A}_2 \norm{B}_2. \tag{ii}
\end{align}
If $s_{\min}(A) \ge \norm{B}_2$, then
\begin{align}
& s_{\min}(A-B) \ge s_{\min}(A) - \norm{B}_2. \tag{iii}
\end{align}
\end{lemma}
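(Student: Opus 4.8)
The plan is to reduce all three statements to the two variational characterizations $\norm{M}_2 = \sup_{\norm{x}=1}\norm{Mx}$ and $\norm{B}_F^2 = \sum_j \norm{b_j}^2$ (columns $b_j$ of $B$), together with the singular value decomposition of $A$. Write $A = U\Sigma V^{\T}$ with $U, V$ orthogonal and $\Sigma$ carrying the singular values $\norm{A}_2 = s_1 \ge \cdots \ge s_d$ of $A$ on its diagonal; in every place the lemma is invoked in this paper $A$ (and, for (iii), $A-B$) is a power of an invertible matrix, hence has full column rank, so $s_d = s_{\min}(A) > 0$ and there is no ambiguity in the convention ``$s_{\min}$ is the smallest nonzero singular value''. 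The single fact I would extract first is the sandwich
$s_{\min}(A)\norm{x} \le \norm{Ax} \le \norm{A}_2 \norm{x}$ for all $x$, which follows from $\norm{Ax}^2 = \norm{\Sigma V^{\T}x}^2 = \sum_{i=1}^d s_i^2 (V^{\T}x)_i^2$ and $\norm{V^{\T}x} = \norm{x}$.

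For (i) I would expand $\norm{AB}_F^2 = \sum_j \norm{Ab_j}^2$ columnwise and apply the sandwich to each $b_j$, giving $s_{\min}(A)^2 \sum_j \norm{b_j}^2 \le \norm{AB}_F^2 \le \norm{A}_2^2 \sum_j \norm{b_j}^2$, which is exactly (i) since $\sum_j \norm{b_j}^2 = \norm{B}_F^2$. For (ii) I would write $\norm{AB}_2 = \sup_{\norm{x}=1}\norm{ABx}$ and apply the sandwich to the vector $Bx$, so that $s_{\min}(A)\norm{Bx} \le \norm{ABx} \le \norm{A}_2\norm{Bx}$ for every $x$; taking the supremum over $\norm{x}=1$ yields $s_{\min}(A)\norm{B}_2 \le \norm{AB}_2 \le \norm{A}_2\norm{B}_2$.

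For (iii), the hypothesis $s_{\min}(A) \ge \norm{B}_2$ first forces $A - B$ to have trivial kernel (if $(A-B)x = 0$ then $s_{\min}(A)\norm{x} \le \norm{Ax} = \norm{Bx} \le \norm{B}_2\norm{x}$, so $x = 0$ unless equality holds throughout, which is the degenerate boundary case), whence $s_{\min}(A-B) = \inf_{\norm{x}=1}\norm{(A-B)x}$. Then for each unit vector $x$, the triangle inequality gives $\norm{(A-B)x} \ge \norm{Ax} - \norm{Bx} \ge s_{\min}(A) - \norm{B}_2$, and taking the infimum proves (iii); alternatively one can simply quote Weyl's perturbation inequality for singular values. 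There is no real obstacle here — the lemma is entirely standard and a reference such as \cite{bhatia1997matrix} already suffices; the only point requiring a moment's care is the bookkeeping around the $s_{\min}$ convention, which I would settle once and for all by the remark above that $A$ is invertible in all applications, so $s_{\min}(A) = \min_{\norm{x}=1}\norm{Ax}$ and no case distinction is needed.
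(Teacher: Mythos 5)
Your proof is correct, and it is in fact more than the paper supplies: the paper states this lemma without proof, delegating to a standard reference (\cite{bhatia1997matrix}), so there is no in-paper argument to compare against. Your route — reduce everything to the sandwich $s_{\min}(A)\norm{x} \le \norm{Ax} \le \norm{A}_2 \norm{x}$ via the SVD, then go columnwise for the Frobenius bound, take a supremum over unit vectors for the operator-norm bound, and use the triangle inequality pointwise for (iii) — is the standard elementary argument and each step checks out. The genuinely useful observation you add is the one about the convention: since the paper defines $s_{\min}$ as the smallest \emph{nonzero} singular value, the lower bounds in (i)--(iii) are false as literally stated for rank-deficient $A$ (take $A$ a rank-one projection and $B$ a column in its kernel, so $AB=0$ while $s_{\min}(A)\norm{B}_F>0$); your remark that in every invocation $A$ (and $A-B$) is a power of an invertible matrix, so that $s_{\min}(A)=\min_{\norm{x}=1}\norm{Ax}$, is exactly the right way to discharge this, and it is a caveat the paper's bare citation glosses over. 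One small tidy-up in (iii): rather than leaving the case $s_{\min}(A)=\norm{B}_2$ as an unresolved ``degenerate boundary case,'' note that there the claimed bound reads $s_{\min}(A-B)\ge 0$, which holds trivially because singular values are nonnegative; with that remark the argument is complete, and your alternative of quoting Weyl's perturbation inequality for singular values gives the same conclusion in one line.
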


A version of Anderson's lemma from \cite{van2008reproducing} which provides a sharp bound on the probability of shifted balls under multivariate Gaussian distributions in terms of the centered probability and the size of the shift. 
\begin{lemma}\label{lem:anderson}
Suppose $\bfxi \sim \mbox{N}_n(0, \Sigma)$ with $\Sigma$ p.d. and $\bfxi_0 \in \mathbb{R}^n$. Let $\norm{\bfxi_0}_{\mathbb{H}}^2 = \bfxi_0^{\T} \Sigma^{-1} \bfxi_0$. Then, for any $t > 0$,
\begin{align*}
P(\norm{\bfxi - \bfxi_0}_2 < t) \ge e^{- \frac{1}{2} \; \norm{\bfxi_0}_{\mathbb{H}}^2}  P(\norm{\bfxi}_2 \leq t/2). 
\end{align*}
\end{lemma}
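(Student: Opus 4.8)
The plan is to establish the slightly stronger inequality $P(\norm{\bfxi - \bfxi_0}_2 < t) \ge e^{-\frac12 \norm{\bfxi_0}_{\mathbb{H}}^2}\, P(\norm{\bfxi}_2 < t)$, from which the stated bound is immediate because $P(\norm{\bfxi}_2 < t) \ge P(\norm{\bfxi}_2 \le t/2)$. The proof is a direct calculation with the Gaussian density together with a symmetrization trick; no appeal to Anderson's inequality for general convex bodies is needed in this finite-dimensional, positive-definite setting.

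First I would write the probability as an integral against the $\Gauss_n(0,\Sigma)$ density and substitute $\eta = \xi - \bfxi_0$:
\begin{align*}
P(\norm{\bfxi - \bfxi_0}_2 < t) = \frac{1}{(2\pi)^{n/2}\abs{\Sigma}^{1/2}}\int_{\norm{\eta}_2 < t} \exp\Bigl\{ -\tfrac12 (\eta + \bfxi_0)^{\T}\Sigma^{-1}(\eta + \bfxi_0)\Bigr\}\, d\eta.
\end{align*}
Expanding the quadratic form as $\eta^{\T}\Sigma^{-1}\eta + 2\,\bfxi_0^{\T}\Sigma^{-1}\eta + \bfxi_0^{\T}\Sigma^{-1}\bfxi_0$ and recalling $\norm{\bfxi_0}_{\mathbb{H}}^2 = \bfxi_0^{\T}\Sigma^{-1}\bfxi_0$, the right-hand side factors as
\begin{align*}
e^{-\frac12 \norm{\bfxi_0}_{\mathbb{H}}^2}\; \bbE\bigl[\, e^{-\bfxi_0^{\T}\Sigma^{-1}\bfxi}\, \ind\{\norm{\bfxi}_2 < t\}\,\bigr].
\end{align*}

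The key step is then to lower-bound the remaining expectation. Both the law of $\bfxi$ and the ball $\{\norm{\cdot}_2 < t\}$ are invariant under $\xi \mapsto -\xi$, so replacing $\bfxi$ by $-\bfxi$ and averaging turns the cross term into a hyperbolic cosine:
\begin{align*}
\bbE\bigl[\, e^{-\bfxi_0^{\T}\Sigma^{-1}\bfxi}\, \ind\{\norm{\bfxi}_2 < t\}\,\bigr]
&= \bbE\bigl[\, \cosh(\bfxi_0^{\T}\Sigma^{-1}\bfxi)\, \ind\{\norm{\bfxi}_2 < t\}\,\bigr] \\
&\ge \bbE\bigl[\, \ind\{\norm{\bfxi}_2 < t\}\,\bigr] = P(\norm{\bfxi}_2 < t),
\end{align*}
using $\cosh \ge 1$ and that the indicator is nonnegative. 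Combining the last two displays with the first yields the stronger inequality, and hence the lemma.

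The argument is essentially elementary, and I do not expect a genuine obstacle; the only point needing care is that the symmetrization step requires both the reference measure and the region of integration to be symmetric about the origin, which is exactly why the statement is confined to centered Gaussians and Euclidean balls. More generally, for shifts $\bfxi_0$ lying outside the Cameron--Martin space, or for arbitrary symmetric convex regions, one would instead combine the Cameron--Martin theorem with Anderson's inequality, as in \cite{van2008reproducing}, but that machinery is unnecessary here since $\Sigma$ is positive definite so that $\norm{\bfxi_0}_{\mathbb{H}}$ is finite for every $\bfxi_0$.
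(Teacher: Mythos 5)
Your proof is correct, and it in fact establishes a stronger statement than the lemma: the change of variables $\eta = \xi - \bfxi_0$ correctly factors out $e^{-\frac12\norm{\bfxi_0}_{\mathbb{H}}^2}$, the symmetrization $\bfxi \mapsto -\bfxi$ is legitimate because both the centered Gaussian law and the Euclidean ball are symmetric, and $\cosh \ge 1$ then gives $P(\norm{\bfxi - \bfxi_0}_2 < t) \ge e^{-\frac12\norm{\bfxi_0}_{\mathbb{H}}^2} P(\norm{\bfxi}_2 < t)$, which dominates the stated bound since the closed ball of radius $t/2$ sits inside the open ball of radius $t$. The paper itself does not prove this lemma at all; it simply quotes it as a known decentering inequality from the Gaussian-process literature (citing van der Vaart and van Zanten), where the general infinite-dimensional version is obtained by combining the Cameron--Martin change-of-measure formula with Anderson's inequality for symmetric convex sets. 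Your route replaces that machinery with a two-line finite-dimensional density computation, which is possible precisely because $\Sigma$ is positive definite so every shift lies in the (finite-dimensional) Cameron--Martin space; as a bonus, your argument never uses convexity of the ball, only its symmetry, so it applies verbatim to any origin-symmetric Borel set, and it avoids the factor-of-two loss in the radius that the quoted version carries over from the general theory. The only cosmetic point is that your conditional-symmetrization identity implicitly uses that the event $\{\norm{\bfxi}_2 < t\}$ has positive probability, which is automatic here, so there is no gap.
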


\medskip
The Dirichlet integral formula (formula 4.635 in \cite{gradshteyn1980corrected}) to simplify integrals over the unit probability simplex.
\begin{lemma}\label{lem:dir_formula}
Let $\psi(\cdot)$ be a Lebesgue integrable function and $\alpha_j > 0, j = 1, \ldots, n$. Then,
\begin{eqnarray*}
\int_{\sum x_j \leq 1} \psi\big(\sum x_j\big) \prod_{j=1}^n x_j^{\alpha_j - 1} d \bfx = \frac{\prod_{j=1}^n \Gamma(\alpha_j)}{\Gamma \big(\sum_{j=1}^n \alpha_j\big)} \int_{t=0}^1 \psi(t) \, t^{( \sum \alpha_j) - 1} dt. 
\end{eqnarray*}
\end{lemma}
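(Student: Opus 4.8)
The plan is to prove the identity by the ``simplex polar coordinates'' change of variables $x_j = t\,u_j$ for $j = 1, \dots, n$, where $t = \sum_{j=1}^n x_j$ and $(u_1, \dots, u_n)$ ranges over the unit simplex $\Delta = \{u_j \ge 0,\ \sum_{j=1}^n u_j = 1\}$, parametrized by its first $n-1$ coordinates. The region of integration $\{x_j \ge 0,\ \sum_j x_j \le 1\}$ maps bijectively onto $\{t\in[0,1]\}\times\Delta$, and a direct computation of the $n\times n$ Jacobian (easiest via elementary row operations, or by a one-line induction on $n$) gives $dx_1\cdots dx_n = t^{n-1}\,dt\,du_1\cdots du_{n-1}$.

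First I would substitute into the left-hand side: $\psi(\sum_j x_j) = \psi(t)$ and $\prod_{j=1}^n x_j^{\alpha_j-1} = t^{\sum_j(\alpha_j-1)}\prod_{j=1}^n u_j^{\alpha_j-1}$. Combining the exponent $\sum_j(\alpha_j-1) = (\sum_j\alpha_j) - n$ with the Jacobian factor $t^{n-1}$ yields $t^{(\sum_j\alpha_j)-1}$, and since $t$ and $u$ vary over a product set the integral factorizes:
\begin{equation*}
\int_{\sum x_j \le 1}\psi\big(\textstyle\sum x_j\big)\prod_{j=1}^n x_j^{\alpha_j-1}\,d\bfx = \Big(\int_0^1 \psi(t)\,t^{(\sum_j\alpha_j)-1}\,dt\Big)\Big(\int_{\Delta}\prod_{j=1}^n u_j^{\alpha_j-1}\,du\Big).
\end{equation*}
To make this rigorous for a merely Lebesgue integrable $\psi$, I would first treat $\psi\ge 0$ using Tonelli's theorem and then extend to general $\psi$ by writing $\psi = \psi^+ - \psi^-$.

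It remains to evaluate the Dirichlet normalizing constant $D := \int_{\Delta}\prod_{j=1}^n u_j^{\alpha_j-1}\,du$, for which I would use the classical Gamma-integral argument: starting from $\prod_{j=1}^n\Gamma(\alpha_j) = \int_{(0,\infty)^n}\prod_j s_j^{\alpha_j-1}e^{-\sum_j s_j}\,ds$ and applying the same substitution $s_j = r v_j$ with $r = \sum_j s_j$, $v\in\Delta$, Jacobian $r^{n-1}$, the right-hand side factorizes as $\big(\int_0^\infty r^{(\sum_j\alpha_j)-1}e^{-r}\,dr\big)D = \Gamma\big(\sum_j\alpha_j\big)\,D$. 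Hence $D = \prod_{j=1}^n\Gamma(\alpha_j)/\Gamma\big(\sum_{j=1}^n\alpha_j\big)$, and substituting back gives the claimed formula.

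The only real obstacle is the bookkeeping: confirming the $t^{n-1}$ Jacobian cleanly and ensuring the factorization step is valid for non-smooth $\psi$ (handled by the Tonelli-then-split device above). If one prefers to avoid the multidimensional change of variables, there is an equivalent proof by induction on $n$: peel off the variable $x_n$, rescale $x_1,\dots,x_{n-1}$ by $1-x_n$ so that the inductive hypothesis applies to an $(n-1)$-fold integral, then collapse the resulting double integral with Fubini and the single Beta evaluation $\int_0^t x^{\alpha_n-1}(t-x)^{A-1}\,dx = t^{\alpha_n+A-1}\Gamma(\alpha_n)\Gamma(A)/\Gamma(\alpha_n+A)$ where $A = \sum_{j=1}^{n-1}\alpha_j$; I would keep this as a fallback.
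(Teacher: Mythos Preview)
Your proof is correct and complete; the simplex change of variables with Jacobian $t^{n-1}$, the Tonelli-then-split justification, and the Gamma-integral evaluation of the Dirichlet constant are all standard and sound. The paper itself does not prove this lemma at all --- it simply records the formula and cites Gradshteyn--Ryzhik (formula 4.635) as a reference --- so there is no proof in the paper to compare against; you have supplied what the paper deliberately omitted.
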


\bibliographystyle{abbrv}
\bibliography{mybibfile}

\begin{thebibliography}{10}

\bibitem{abramowitz1965handbook}
M.~Abramowitz and I.~Stegun.
\newblock {\em Handbook of mathematical functions: with formulas, graphs, and
  mathematical tables}, volume~55.
\newblock Dover publications, 1965.

\bibitem{adler1990introduction}
R.~Adler.
\newblock An introduction to continuity, extrema, and related topics for
  general {G}aussian processes.
\newblock {\em Lecture Notes-Monograph Series}, pages 1--155, 1990.

\bibitem{bhatia1997matrix}
R.~Bhatia.
\newblock {\em Matrix analysis}, volume 169.
\newblock Springer Science \& Business Media, 1997.

\bibitem{bhatia2009positive}
R.~Bhatia.
\newblock {\em Positive definite matrices}.
\newblock Princeton University Press, 2009.

\bibitem{aniso2014}
A.~Bhattacharya, D.~Pati, and D.~Dunson.
\newblock Anisotropic function estimation using multi-bandwidth gaussian
  processes.
\newblock {\em Annals of Statistics}, 42(1):352--381, 2014.

\bibitem{birge2004model}
L.~Birg{\'e} et~al.
\newblock Model selection for gaussian regression with random design.
\newblock {\em Bernoulli}, 10(6):1039--1051, 2004.

\bibitem{bontemps2011bernstein}
D.~Bontemps.
\newblock Bernstein--von {M}ises theorems for {G}aussian regression with
  increasing number of regressors.
\newblock {\em The Annals of Statistics}, 39(5):2557--2584, 2011.

\bibitem{brown2002asymptotic}
L.~D. Brown, T.~T. Cai, M.~G. Low, and C.-H. Zhang.
\newblock Asymptotic equivalence theory for nonparametric regression with
  random design.
\newblock {\em Annals of statistics}, pages 688--707, 2002.

\bibitem{castillo2008lower}
I.~Castillo.
\newblock Lower bounds for posterior rates with gaussian process priors.
\newblock {\em Electronic Journal of Statistics}, 2:1281--1299, 2008.

\bibitem{choi2007posterior}
T.~Choi and M.~Schervish.
\newblock {On posterior consistency in nonparametric regression problems}.
\newblock {\em Journal of Multivariate Analysis}, 98(10):1969--1987, 2007.

\bibitem{durrett2010probability}
R.~Durrett.
\newblock {\em Probability: theory and examples}.
\newblock Cambridge university press, 2010.

\bibitem{frechet1957distance}
M.~Fr{\'e}chet.
\newblock Sur la distance de deux lois de probabilit{\'e}.
\newblock {\em COMPTES RENDUS HEBDOMADAIRES DES SEANCES DE L ACADEMIE DES
  SCIENCES}, 244(6):689--692, 1957.

\bibitem{geer2000empirical}
S.~A. Geer.
\newblock {\em Empirical Processes in M-estimation}, volume~6.
\newblock Cambridge university press, 2000.

\bibitem{gelbrich1990formula}
M.~Gelbrich.
\newblock On a formula for the $l_2$ wasserstein metric between measures on
  euclidean and hilbert spaces.
\newblock {\em Mathematische Nachrichten}, 147(1):185--203, 1990.

\bibitem{ghosal2000convergence}
S.~Ghosal, J.~Ghosh, and A.~Van Der~Vaart.
\newblock Convergence rates of posterior distributions.
\newblock {\em Annals of Statistics}, 28(2):500--531, 2000.

\bibitem{ghosal2006posterior}
S.~Ghosal and A.~Roy.
\newblock {Posterior consistency of Gaussian process prior for nonparametric
  binary regression}.
\newblock {\em The Annals of Statistics}, 34(5):2413--2429, 2006.

\bibitem{givens1984class}
C.~Givens and R.~Shortt.
\newblock A class of wasserstein metrics for probability distributions.
\newblock {\em The Michigan Mathematical Journal}, 31(2):231--240, 1984.

\bibitem{gradshteyn1980corrected}
I.~Gradshteyn and I.~Ryzhik.
\newblock Corrected and enlarged edition.
\newblock {\em Tables of Integrals, Series and ProductsAcademic Press, New
  York}, 1980.

\bibitem{kuelbs1993metric}
J.~Kuelbs and W.~Li.
\newblock Metric entropy and the small ball problem for gaussian measures.
\newblock {\em Journal of Functional Analysis}, 116(1):133--157, 1993.

\bibitem{kuelbs1995small}
J.~Kuelbs, W.~Li, and Q.~Shao.
\newblock Small ball probabilities for gaussian processes with stationary
  increments under h{\"o}lder norms.
\newblock {\em Journal of Theoretical Probability}, 8(2):361--386, 1995.

\bibitem{li1999approximation}
W.~Li and W.~Linde.
\newblock Approximation, metric entropy and small ball estimates for gaussian
  measures.
\newblock {\em The Annals of Probability}, 27(3):1556--1578, 1999.

\bibitem{pati2015rd}
D.~Pati, A.~Bhattacharya, and G.~Cheng.
\newblock Optimal bayesian estimation in random covariate design with a
  rescaled {G}aussian process prior.
\newblock {\em Journal of Machine Learning Research}, revision invited.

\bibitem{rasmussen2006gaussian}
C.~Rasmussen and C.~Williams.
\newblock Gaussian processes for machine learning.
\newblock 2006.

\bibitem{seeger2008information}
M.~Seeger, S.~M. Kakade, and D.~P. Foster.
\newblock Information consistency of nonparametric gaussian process methods.
\newblock {\em Information Theory, IEEE Transactions on}, 54(5):2376--2382,
  2008.

\bibitem{steinwart2009optimal}
I.~Steinwart, D.~R. Hush, and C.~Scovel.
\newblock Optimal rates for regularized least squares regression.
\newblock In {\em COLT}, 2009.

\bibitem{gabor1939orthogonal}
G.~Szeg{\"o}.
\newblock {\em Orthogonal polynomials}, volume~23.
\newblock American Mathematical Soc., 1939.

\bibitem{tropp2012user}
J.~Tropp.
\newblock User-friendly tail bounds for sums of random matrices.
\newblock {\em Foundations of Computational Mathematics}, 12(4):389--434, 2012.

\bibitem{van2007bayesian}
A.~van~der Vaart and J.~van Zanten.
\newblock {Bayesian inference with rescaled Gaussian process priors}.
\newblock {\em Electronic Journal of Statistics}, 1:433--448, 2007.

\bibitem{van2008rates}
A.~van~der Vaart and J.~van Zanten.
\newblock Rates of contraction of posterior distributions based on gaussian
  process priors.
\newblock {\em The Annals of Statistics}, 36(3):1435--1463, 2008.

\bibitem{van2008reproducing}
A.~van~der Vaart and J.~van Zanten.
\newblock {Reproducing kernel Hilbert spaces of Gaussian priors}.
\newblock {\em IMS Collections}, 3:200--222, 2008.

\bibitem{van2009adaptive}
A.~van~der Vaart and J.~van Zanten.
\newblock {Adaptive Bayesian estimation using a Gaussian random field with
  inverse Gamma bandwidth}.
\newblock {\em The Annals of Statistics}, 37(5B):2655--2675, 2009.

\bibitem{van2011information}
A.~van~der Vaart and J.~van Zanten.
\newblock Information rates of nonparametric gaussian process methods.
\newblock {\em Journal of Machine Learning Research}, 12:2095--2119, 2011.

\bibitem{vershynin2010introduction}
R.~Vershynin.
\newblock Introduction to the non-asymptotic analysis of random matrices.
\newblock {\em arXiv preprint arXiv:1011.3027}, 2010.

\end{thebibliography}
\end{document}